\documentclass[12pt,reqno]{amsart}
\usepackage{latexsym}
\usepackage{amssymb}
\usepackage{verbatim}
\usepackage{version}

\catcode`\@=11
\@namedef{subjclassname@2010}{%
  \textup{2010} Mathematics Subject Classification}
\catcode`\@=12

\textheight22.8cm \textwidth15.6cm \hoffset-1.7cm \voffset-.5cm

\newtheorem{theorem}{Theorem}
\newtheorem{proposition}[theorem]{Proposition}

\newtheorem{lemma}[theorem]{Lemma}
\newtheorem{corollary}[theorem]{Corollary}

\theoremstyle{remark}

\newtheorem{remark}[theorem]{Remark}
\newtheorem{remarks}[theorem]{Remarks}

\numberwithin{equation}{section}

\def\PSL{\operatorname{PSL}}
\newcommand{\Z}{\mathbb{Z}}

\newcommand{\R}{\mathbb{\R}}

\includeversion{comment}
\begin{document}

\title[Periodicity of free subgroup numbers modulo prime powers]{Periodicity of free subgroup numbers modulo prime powers} 
\author[C. Krattenthaler and 
T.\,W. M\"uller]{C. Krattenthaler$^{\dagger}$ and
T. W. M\"uller$^*$} 

\address{$^{\dagger}$Fakult\"at f\"ur Mathematik, Universit\"at Wien, 
Oskar-Morgenstern-Platz~1, A-1090 Vienna, Austria.
WWW: {\tt http://www.mat.univie.ac.at/\lower0.5ex\hbox{\~{}}kratt}.}

\address{$^*$School of Mathematical Sciences, Queen Mary
\& Westfield College, University of London,
Mile End Road, London E1 4NS, United Kingdom.}

\thanks{$^\dagger$Research partially supported by the Austrian
Science Foundation FWF, grants Z130-N13 and S50-N15,
the latter in the framework of the Special Research Program
``Algorithmic and Enumerative Combinatorics"}

\subjclass[2010]{Primary 05A15;
Secondary 05E99 11A07 20E06 20E07}

\keywords{virtually free groups, free subgroup numbers, 
modular group, periodic sequences}

\begin{abstract}
We completely characterise when the sequence of free subgroup numbers of a finitely generated virtually free group is ultimately periodic modulo a given prime power.
\end{abstract}

\maketitle

\section{Introduction}
\label{sec:intro}

\noindent For a finitely generated virtually free group $\Gamma$, denote by
$m_\Gamma$ the least common multiple of the orders of the finite
subgroups in $\Gamma$ and, for a 
positive integer $\lambda$, let $f_\lambda(\Gamma)$ denote the number
of free subgroups of index $\lambda m_\Gamma$ in $\Gamma$.  
In \cite{KM2}, the authors show, among other things, that the number
$f_\lambda(PSL_2(\Z))$ of free subgroups 
of index~$6\lambda$ in the inhomogeneous modular group $\PSL_2(\Z)$,
considered as a sequence indexed by $\lambda$, is 
ultimately periodic modulo
any fixed prime power $p^\alpha$, if $p$ is a prime number with $p\geq5$. 
More precise results on the length of the period, and an explicit
formula for the linear recurrence satisfied by these numbers 
modulo~$p^\alpha$ are also provided in \cite{KM2}. As is well known,
ultimate periodicity of the sequence
$\big(f_\lambda(\Gamma)\big)_{\lambda\geq1}$ is equivalent to rationality of
the corresponding generating function $F_\Gamma(z) =
\sum_{\lambda\geq0} f_{\lambda+1}(\Gamma) z^\lambda$.  

The purpose of the present paper is to demonstrate that the 
periodicity phenomenon discovered in \cite{KM2} holds in a much wider
context, namely that of finitely generated virtually free
groups. Indeed, our main result (Theorem~\ref{Thm:Main}) provides an
explicit characterisation of all pairs $(\Gamma, p^\alpha)$, where
$\Gamma$ is a finitely generated virtually free group and $p^\alpha$
is a proper prime power, for which the sequence of free subgroup
numbers of $\Gamma$ is ultimately periodic modulo~$p^\alpha$.  
Roughly speaking, for ``almost all" pairs $(\Gamma, p)$ the
sequence $\big(f_\lambda(\Gamma)\big)_{\lambda\geq0}$ is ultimately periodic
modulo~$p^\alpha$ for all $\alpha\geq1$, the only exception
occurring when $p\mid m_\Gamma$ and $\mu_p(\Gamma)=0$, where
$\mu_p(\Gamma)$ is a certain invariant defined in 
(\ref{Eq:mupDef}) and discussed in the paragraph following that formula. 

In order to further place our results into context, we point out that,
for primes $p$ dividing the constant $m_\Gamma$, an elaborate theory
is presented in \cite{MuPuFree} for the behaviour of the
arithmetic function $f_\lambda(\Gamma)$ modulo~$p$.  Recently, this theory has
been supplemented by congruences modulo (essentially arbitrary)
$2$-powers and $3$-powers for the number of free subgroups of finite
index in lifts of the classical modular group; that is, amalgamated
products of the form 
\begin{equation*}
%\label{Eq:ModLifts}
\Gamma_\ell = C_{2\ell} \underset{C_\ell}{\ast} C_{3\ell},\quad \ell\geq1;
\end{equation*}
cf.\ Theorems~19 and 20 in \cite[Sec.~8]{KKM}, and  
Section~16 in \cite{KM}, in particular, \cite[Thms.~49--52]{KM}. 
These results demonstrate a highly
non-trivial behaviour of the sequences 
$\big(f_\lambda(\Gamma_\ell)\big)_{\lambda\ge1}$ modulo
powers of $2$ if $\ell$ is odd (in which case $\mu_2(\Gamma_\ell)=0$), 
and modulo powers of $3$ for $3\nmid \ell$ (in which case 
$\mu_3(\Gamma_\ell)=0$). For instance, for the sequence
$\big(f_\lambda=f_\lambda(\Gamma_1)\big)_{\lambda\ge1}$ of  
free subgroup numbers of the group $\PSL_2(\Z)$, one obtains that:
\begin{enumerate} 
\item $f_\lambda\equiv-1~(\text{mod }3)$ if, and only if, 
the $3$-adic expansion of $\lambda$ is an element of
$\{0,2\}^*1$;
\vspace{2mm}

\item $f_\lambda\equiv1~(\text{mod }3)$ if, and only if, 
the $3$-adic expansion of $\lambda$ is an element of
$$
\{0,2\}^*100^*\cup \{0,2\}^*122^*;$$

\item for all other $\lambda$, we have $f_\lambda\equiv0~(\text{mod }3)$;
\end{enumerate}
cf.\ \cite[Cor.~53]{KM}. Here, for a set $\Omega$, we denote by
$\Omega^\ast$ the free monoid generated by $\Omega$. 
All this is in sharp contrast to ``most" of the cases in
the classification result in Theorem~\ref{Thm:Main},
which exhibit ``simple" (ultimate) periodicity.

In proving Theorem~\ref{Thm:Main}, the bulk of the argument lies in
showing that, if $p$ is a prime number {\it not\/} dividing $m_\Gamma$, then
the sequence $\big(f_\lambda(\Gamma)\big)_{\lambda\geq1}$ is ultimately
periodic modulo~$p^\alpha$ for  every integer $\alpha\geq1$; this is
the contents of Theorem~\ref{Thm:Periodic}, whose proof occupies
Sections~\ref{Sec:Periodic}--\ref{Sec:PeriodicThmProof}. 
 The case where $p\mid m_\Gamma$ is largely taken care of by
Theorem~\ref{Thm:mup>0}; its proof in
Section~\ref{Sec:prop} is by an inductive argument, which is
based on an earlier generating function result in \cite{MuPuFree}. The proof of Theorem~\ref{Thm:Main} itself appears in Section~\ref{Sec:Main}.
Precise formulations of our  results are found in Section~\ref{Sec:Results},
while the next section collects together definitions as well as some background
material on virtually free groups.

\section{Some preliminaries on virtually free groups}
\label{Sec:Prelim}

\noindent Our  notation and terminology here follows Serre's book \cite{Serre2};
in particular, the category of graphs used is described in
\cite[\S2]{Serre2}. 
This category deviates slightly from the usual notions in graph theory.
Specifically, a {\it graph} $X$ consists of two sets:
$E(X)$, the set of (directed) {\it edges}, and $V(X)$, the set of
{\it vertices}. The set $E(X)$ is endowed with a fixed-point-free involution
${}^-: E(X) \rightarrow E(X)$ ({\it reversal of orientation}), and there are
two functions $o,t: E(X)\rightarrow V(X)$ assigning to an edge $e\in
E(X)$ its {\it origin} $o(e)$ and {\it terminus} $t(e)$, such that $t(\bar{e}) =
o(e)$. 
The reader should note that, according to the above
definition, graphs may have loops (that is, edges $e$ with
$o(e)=t(e)$) and multiple edges (that is, several edges with
the same origin and the same terminus).  
An {\it orientation} $\mathcal{O}(X)$ consists of a choice of exactly 
one edge in each pair $\{e, \bar{e}\}$ 
(this is indeed always a {\it pair} -- even for loops --
since, by definition, the involution
${}^-$ is fixed-point-free). Such a pair is called a
{\it geometric edge}. 

Let $\Gamma$ be a finitely generated virtually free group with
Stallings decomposition\break
 $(\Gamma(-), X)$; that is, $(\Gamma(-), X)$ is
a finite graph of finite groups with fundamental group $\Gamma \cong
\pi_1(\Gamma(-), X)$. If $\mathfrak{F}$ is a free subgroup of finite
index in $\Gamma$ then, following an idea of C.\,T.\,C. Wall, one
defines the (rational) Euler characteristic $\chi(\Gamma)$ of $\Gamma$
as 
\begin{equation}
\label{Eq:EulerChar}
\chi(\Gamma) = - \frac{\mathrm{rk}(\mathfrak{F}) - 1}{(\Gamma:\mathfrak{F})}.
\end{equation}
(This is well-defined in view of Schreier's index formula  in
\cite{Schreier}.)  
In terms of the above decomposition of $\Gamma$, we have
\begin{equation}
\label{Eq:EulerCharDecomp}
\chi(\Gamma) = \sum_{v\in V(X)} \frac{1}{\vert
\Gamma(v)\vert}\,-\,\sum_{e\in\mathcal{O}(X)} \frac{1}{\vert
\Gamma(e)\vert}. 
\end{equation}
Equation~\eqref{Eq:EulerCharDecomp} reflects the fact that, in our
situation, the Euler characteristic in the sense of Wall coincides
with the equivariant Euler characteristic $\chi_T(\Gamma)$ of $\Gamma$
relative to the tree $T$ canonically associated with $\Gamma$ in the
sense of Bass--Serre theory; cf.\ \cite[Chap.~IX, Prop.~7.3]{Brown} or
\cite[Prop.~14]{Serre1}. We remark that a  finitely generated
virtually free group $\Gamma$ is largest among finitely generated
groups in the sense of Pride's preorder \cite{Pride} (i.e., $\Gamma$
has a subgroup of finite index, which can be mapped onto the free
group of rank $2$) if, and only if, $\chi(\Gamma)<0$; see
Lemma~\ref{Lem:chi<0Equivs} in Section~\ref{Sec:Large}.   

As in the introduction, denote by $m_\Gamma$ the least common multiple
of the orders of the finite subgroups in $\Gamma$, so that, again in
terms of the above Stallings decomposition of $\Gamma$,  
\[
m_\Gamma = \mathrm{lcm}\big\{\vert\Gamma(v)\vert:\, v\in V(X)\big\}.
\]
(This formula essentially follows from the well-known fact that a
finite group has a fixed point when acting on a tree.)  
The type $\tau(\Gamma)$ of a finitely generated virtually free group
$\Gamma \cong \pi_1(\Gamma(-), X)$ is defined as the  
tuple
\[
\tau(\Gamma) = \big(m_\Gamma; \zeta_1(\Gamma), \ldots, \zeta_\kappa(\Gamma), \ldots, \zeta_{m_\Gamma}(\Gamma)\big),
\]
where the  
$\zeta_\kappa(\Gamma)$'s are integers indexed by the divisors of
$m_\Gamma$, given by 
\[
\zeta_\kappa(\Gamma) = 
\big\vert\big\{e\in \mathcal{O}(X):\, \vert\Gamma(e)\vert
\,\big\vert\, \kappa\big\}\big\vert\,-\, \big\vert\big\{v\in V(X):\,
\vert\Gamma(v)\vert \,\big\vert\, \kappa\big\}\big\vert. 
\]
It can be shown that the type $\tau(\Gamma)$ is in fact an invariant
of the group $\Gamma$, i.e., independent of the particular
decomposition of $\Gamma$ in terms of a graph of groups $(\Gamma(-),
X)$, and that two finitely generated virtually free groups $\Gamma_1$
and $\Gamma_2$ contain the same number of free subgroups of index $n$
for each positive integer $n$ if, and only if, $\tau(\Gamma_1) =
\tau(\Gamma_2)$; cf.\ \cite[Theorem~2]{MuDiss}. We have
$\zeta_\kappa(\Gamma)\geq0$ for $\kappa<m_\Gamma$ and
$\zeta_{m_\Gamma}(\Gamma)\geq-1$ with equality occurring in the latter
inequality if, and only if, $\Gamma$ is the fundamental group of a
tree of groups; cf.\ \cite[Prop.~1]{MuEJC} or \cite[Lemma~2]{MuDiss}. 

We observe that, as a
consequence of (\ref{Eq:EulerCharDecomp}), the Euler characteristic of
$\Gamma$ can be expressed in terms of the type $\tau(\Gamma)$ via 
\begin{equation}
\label{Eq:EulerTypeRewrite}
\chi(\Gamma) = - m_\Gamma^{-1} \sum_{\kappa \mid m_\Gamma}
\varphi(m_\Gamma/\kappa) \,\zeta_\kappa(\Gamma), 
\end{equation}
where $\varphi$ is Euler's totient function. It follows in particular 
that, if two finitely generated virtually free groups have the same
number of free subgroups of index $n$ for every $n$, then their Euler
characteristics must coincide. 

The proof of Theorem~\ref{Thm:Periodic}, as given in
Section~\ref{Sec:PeriodicThmProof}, is based on the analysis of a second
arithmetic function associated with the group $\Gamma$. Define a
\textit{torsion-free $\Gamma$-action on a set $\Omega$} to be a
$\Gamma$-action on $\Omega$ which is free when restricted to finite
subgroups, and let 
\[
g_\lambda(\Gamma):= \frac{\mbox{number of torsion-free
$\Gamma$-actions on a set with $\lambda m_\Gamma$ elements}}{(\lambda
m_\Gamma)!},\quad \lambda\geq0; 
\]
in particular, $g_0(\Gamma)=1$. The sequences
$\big(f_\lambda(\Gamma)\big)_{\lambda\geq1}$ and
$\big(g_\lambda(\Gamma)\big)_{\lambda\geq0}$ are related via the
Hall-type transformation formula\footnote{See \cite[Cor.~1]{MuDiss}, or
\cite[Prop.~1]{DM} for a more general result.}  
\begin{equation}
\label{Eq:Transform}
\sum_{\mu=0}^{\lambda-1} g_\mu(\Gamma) f_{\lambda-\mu}(\Gamma) =
m_\Gamma \lambda g_\lambda(\Gamma),\quad \lambda\geq1. 
\end{equation}
Moreover, a careful analysis of the universal mapping property
associated with the presentation $\Gamma\cong\pi_1(\Gamma(-), X)$
leads to the explicit formula 
\begin{equation}
\label{Eq:gExplicit}
g_\lambda(\Gamma) = \frac{\prod\limits_{e\in\mathcal{O}(X)} (\lambda
m_\Gamma/\vert\Gamma(e)\vert)!\,\vert\Gamma(e)\vert^{\lambda
m_\Gamma/\vert\Gamma(e)\vert}}{\prod\limits_{v\in V(X)}(\lambda
m_\Gamma/\vert\Gamma(v)\vert)!\,\vert\Gamma(v)\vert^{\lambda
m_\Gamma/\vert\Gamma(v)\vert}},\quad \lambda\geq0 ,
\end{equation}
for $g_\lambda(\Gamma)$, where $\mathcal{O}(X)$ is any orientation of
$X$; cf.\ \cite[Prop.~3]{MuDiss}. Introducing the generating functions 
\[
F_\Gamma(z) := \sum_{\lambda\geq0} f_{\lambda+1}(\Gamma) z^\lambda
\,\mbox{ and }\, G_\Gamma(z):= \sum_{\lambda\geq0} g_\lambda(\Gamma)
z^\lambda, 
\]
Equation~\eqref{Eq:Transform} is seen to be equivalent to the relation
\begin{equation}
\label{Eq:GFTransform}
F_\Gamma(z) = m_\Gamma \frac{d}{dz}\big(\log G_\Gamma(z)\big).
\end{equation}

Define the \textit{free rank} $\mu(\Gamma)$ of a finitely generated
virtually free group $\Gamma$ to be the rank of a free subgroup of
index $m_\Gamma$ in $\Gamma$ (existence of such a subgroup follows,
for instance, from Lemmas~8 and 10 in \cite{Serre2}; it need not be 
unique, though). We note that, in view of \eqref{Eq:EulerChar}, the
quantity $\mu(\Gamma)$ is connected with the Euler characteristic of
$\Gamma$ via  
\begin{equation}
\label{Eq:FreeEuler}
\mu(\Gamma) + m_\Gamma \chi(\Gamma) = 1,
\end{equation}
which shows in particular that $\mu(\Gamma)$ is
well-defined. Combining Equations (\ref{Eq:EulerTypeRewrite}) and
(\ref{Eq:FreeEuler}), we see that the free rank $\mu(\Gamma)$ can be
expressed in terms of the type of $\Gamma$ via  
\begin{equation}
\label{Eq:muTypeRewrite}
\mu(\Gamma) = 1 + \sum_{\kappa\mid m_\Gamma} \varphi(m_\Gamma)/\kappa) \zeta_\kappa(\Gamma).
\end{equation}
Given a finitely generated virtually free group $\Gamma$ and a prime number $p$, we introduce, in analogy with formula (\ref{Eq:muTypeRewrite}), the $p$-rank $\mu_p(\Gamma)$ of $\Gamma$ via the equation 
\begin{equation}
\label{Eq:mupDef}
\mu_p(\Gamma) = 1 + \sum_{p\mid \kappa \mid m_\Gamma} \varphi(m_\Gamma)/\kappa) \zeta_\kappa(\Gamma).
\end{equation}
Clearly, $\mu_p(\Gamma)\geq0$, with equality occurring in this
inequality if, and only if, $\Gamma$ is the fundamental group of a
tree of groups and $\zeta_\kappa(\Gamma)=0$ for $p\mid \kappa\mid
m_\Gamma$ and $\kappa<m_\Gamma$. Similarly, we have $\mu_p(\Gamma)=1$
if, and only if, (i) $\zeta_\kappa(\Gamma)=0$ for all $\kappa$ with
$p\mid \kappa\mid m_\Gamma$, or (ii) $\Gamma$ is the fundamental group
of a tree of groups, $m_\Gamma$ is even, $p\mid m_\Gamma/2$,
$\zeta_{m_\Gamma/2}(\Gamma)=1$, and $\zeta_\kappa(\Gamma)=0$ for
$p\mid \kappa\mid m_\Gamma$ and $\kappa<m_\Gamma/2$. 
To give concrete examples,
if $p$ is an odd prime number, then the groups 
\[
\Gamma_{p, \alpha} = C_2 \ast C_{2p} \ast \underbrace{C_p \ast \cdots \ast C_p}_{\alpha\,\mathrm{copies}},\quad \alpha\geq0
\]
satisfy $\mu_p(\Gamma_{p, \alpha}) = 1$, while the groups
\[
\Gamma_{2, \alpha} = C_4\ast C_4\ast \underbrace{C_2\ast \cdots C_2}_{\alpha\,\mathrm{copies}},\quad \alpha\geq0
\]
satisfy $\mu_2(\Gamma)=1$.

\section{The results}
\label{Sec:Results}
\noindent Here and in the sequel,
given power series $f(z)$ and $g(z)$,
we write 
$$f(z)=g(z)~\text {modulo}~p^\gamma$$ 
to mean that the coefficients
of $z^i$ in $f(z)$ and $g(z)$ agree modulo~$p^\gamma$ for all $i$; in
particular, the phrase ``$F_\Gamma(z)$ is rational modulo~$p^\alpha$''
means that $F_\Gamma(z)$ equals a certain rational function
modulo~$p^\alpha$ in the sense of the above definition. Our main
result, which completely characterises rationality of the generating
function $F_\Gamma(z)$ (i.e., ultimate periodicity of the sequence
$\big(f_\lambda(\Gamma)\big)_{\lambda\ge1}$) 
modulo prime powers, is as follows. 

\begin{theorem}
\label{Thm:Main}
Let $\Gamma$ be a finitely generated virtually free group, let $p$ be
a prime number, and let $F_\Gamma(z)$ denote the generating
function $\sum_{\lambda\geq0} f_{\lambda+1}(\Gamma) z^\lambda$
for the free subgroup numbers of $\Gamma$.
Then the following assertions are equivalent:

\begin{enumerate} 
\item [(i)] the series $F_\Gamma(z)$ is rational
modulo~$p^\alpha$ for each positive integer $\alpha;$
\vspace{2mm}
\item [(ii)] the series $F_\Gamma(z)$ is rational
modulo $p;$
\vspace{2mm}
\item [(iii)] 
The pair $(\Gamma,p)$ satisfies one of the following
mutually exclusive conditions:
\vspace{1mm}
\begin{enumerate} 
\item [(iii)$_1$] $p\nmid m_\Gamma;$
\vspace{1mm}
\item [(iii)$_2$] $p\mid m_\Gamma$ and $\mu_p(\Gamma)>0;$
\vspace{1mm}
\item [(iii)$_3$]
$\Gamma$ is finite;
\vspace{1mm}
\item [(iii)$_4$]
$\Gamma$ is virtually infinite-cyclic and $p = 2$.
\end{enumerate}
\end{enumerate}
\end{theorem}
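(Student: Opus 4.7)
The plan is to verify the equivalence via the cycle (i)$\Rightarrow$(ii)$\Rightarrow$(iii)$\Rightarrow$(i). The implication (i)$\Rightarrow$(ii) is immediate on specialising $\alpha = 1$, so the substantive work is in (iii)$\Rightarrow$(i) and (ii)$\Rightarrow$(iii).

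For (iii)$\Rightarrow$(i) I would argue by cases on the four subconditions. Subcase (iii)$_1$ is precisely the content of Theorem~\ref{Thm:Periodic}, and Subcase (iii)$_2$ is precisely the content of Theorem~\ref{Thm:mup>0}; these two results carry almost all of the technical work. Subcase (iii)$_3$ is trivial: if $\Gamma$ is finite then the only free subgroup is $\{e\}$, which has index $|\Gamma| = m_\Gamma$, so $f_1(\Gamma) = 1$ and $f_\lambda(\Gamma) = 0$ for $\lambda \ge 2$, giving $F_\Gamma(z) = 1$. For Subcase (iii)$_4$, I would invoke the classification of infinite virtually cyclic groups: such $\Gamma$ is either $\Z$ itself (in which case $m_\Gamma = 1$ and we are already in (iii)$_1$), a semidirect product $\Z \rtimes F$ with $F$ finite, or an amalgam $A *_C B$ with $[A:C] = [B:C] = 2$. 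In every one of these cases $\chi(\Gamma) = 0$, so (\ref{Eq:FreeEuler}) gives $\mu(\Gamma) = 1$; substituting into (\ref{Eq:gExplicit}) produces an explicit closed form for $G_\Gamma(z)$, and then (\ref{Eq:GFTransform}) converts this into a closed form for $F_\Gamma(z)$ which is rational on the nose (for instance $F_{D_\infty}(z) = 1/(1-z)$), hence rational modulo~$2$.

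For the harder direction (ii)$\Rightarrow$(iii) I would argue the contrapositive: assuming $\Gamma$ is infinite, $p \mid m_\Gamma$, $\mu_p(\Gamma) = 0$, and the pair $(\Gamma, p)$ does not satisfy (iii)$_4$, I would show that $F_\Gamma(z)$ is not rational modulo~$p$. The hypothesis $\mu_p(\Gamma) = 0$ combined with $p \mid m_\Gamma$ forces, via the characterisation following (\ref{Eq:mupDef}), that $\Gamma$ is the fundamental group of a tree of groups with $\zeta_\kappa(\Gamma) = 0$ for every proper divisor $\kappa$ of $m_\Gamma$ divisible by $p$. Under these constraints the generating function analysis of \cite{MuPuFree} provides a Frobenius-type substitution rule relating $G_\Gamma(z) \bmod p$ to $G_\Gamma(z^p) \bmod p$; passing through (\ref{Eq:GFTransform}) transfers this to a self-similar description of the residues $f_\lambda(\Gamma) \bmod p$ indexed by the $p$-adic digits of $\lambda$, entirely analogous to the $\PSL_2(\Z) \bmod 3$ picture recalled in Section~\ref{sec:intro}. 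A $p$-automatic sequence of this form is ultimately periodic only if the underlying substitution degenerates, which under the structural hypotheses happens precisely when $\mu(\Gamma) = 1$, i.e., in the excluded virtually infinite cyclic case at $p=2$.

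The principal obstacle is this last step: translating the algebraic hypothesis $\mu_p(\Gamma) = 0$ into a concrete obstruction to periodicity. One must verify that the Frobenius-type recursion inherited from \cite{MuPuFree} is \emph{genuinely} self-similar, rather than secretly collapsing to a finite-dimensional linear recurrence, for every admissible $\Gamma$ outside the virtually infinite cyclic exception, and then check that this non-triviality survives the logarithmic derivative transformation (\ref{Eq:GFTransform}) passing from $G_\Gamma$ to $F_\Gamma$. The exception at $p = 2$ is explained by the fact that virtually infinite cyclic groups alone have $\mu(\Gamma) = 1$, which collapses the order of the differential equation for $G_\Gamma$ to one; in all remaining groups with $\mu_p(\Gamma) = 0$ we have $\mu(\Gamma) \ge 2$, the recursion is genuinely self-similar, and this forces $F_\Gamma(z) \bmod p$ to be non-rational.
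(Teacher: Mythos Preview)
Your proof follows the same cycle of implications as the paper, and your treatment of (i)$\Rightarrow$(ii) and (iii)$\Rightarrow$(i) matches the paper's essentially verbatim (the paper handles (iii)$_4$ by citing Corollary~\ref{Cor:mu=1}, which is the computation you outline).

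The one substantive difference is in (ii)$\Rightarrow$(iii). You propose to establish non-rationality in the case $\mu_p(\Gamma)=0$, $\mu(\Gamma)\ge 2$ by analysing the Frobenius-type self-similarity coming from \cite{MuPuFree} and arguing that the resulting $p$-automatic sequence cannot be ultimately periodic unless the recursion degenerates. That is the correct intuition, but you rightly flag it as ``the principal obstacle'' and leave the verification incomplete. The paper sidesteps this entirely: it simply invokes \cite[Theorem~A(iii)]{MuPuFree}, which already states that when $\mu_p(\Gamma)=0$ the series $F_\Gamma(z)$ is rational modulo~$p$ if, and only if, $\mu(\Gamma)=0$, or $\mu(\Gamma)=1$ and $p=2$. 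So what you identify as the hard step is not re-proved here at all; it is quoted as a known result. Your sketch is a reasonable outline of how that theorem in \cite{MuPuFree} is actually established, but for the purposes of proving Theorem~\ref{Thm:Main} the citation suffices and no further work is needed.
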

The proof of Theorem~\ref{Thm:Main} is broken up into two main steps,
each of which is a meaningful result in its own right. Case (iii)$_1$
is taken care of by the following fundamental result. 
\begin{theorem}
\label{Thm:Periodic}
Let $\Gamma$ be a finitely generated virtually free group, let $p$ be
a prime number not dividing $m_\Gamma,$ and let $\alpha$ be a positive
integer. Then the sequence $\big(f_\lambda(\Gamma)\big)_{\lambda\ge1}$ 
is ultimately periodic modulo~$p^\alpha$. 
\end{theorem}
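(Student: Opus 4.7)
The plan is to exploit the identity $F_\Gamma(z)=m_\Gamma G_\Gamma'(z)/G_\Gamma(z)$ from \eqref{Eq:GFTransform} together with a direct $p$-adic analysis of the closed product formula \eqref{Eq:gExplicit} for $g_\lambda(\Gamma)$. The hypothesis $p\nmid m_\Gamma$ makes $m_\Gamma$ a unit modulo $p^\alpha$, and since $G_\Gamma(0)=1$ is also a unit, $G_\Gamma(z)$ is invertible in the ring $(\mathbb{Z}/p^\alpha\mathbb{Z})[[z]]$; hence once $G_\Gamma(z)$ is shown to be rational modulo $p^\alpha$, the rationality of $F_\Gamma(z)$ modulo $p^\alpha$ follows automatically, and with it the asserted ultimate periodicity of $\big(f_\lambda(\Gamma)\big)_{\lambda\geq 1}$. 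I would then split the argument according to the free rank $\mu(\Gamma)$, which by \eqref{Eq:FreeEuler} detects whether $\Gamma$ is finite ($\mu=0$), virtually infinite cyclic ($\mu=1$), or ``large'' ($\mu\geq 2$).

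The principal case is $\mu(\Gamma)\geq 2$. Setting $a_e=m_\Gamma/|\Gamma(e)|$ and $a_v=m_\Gamma/|\Gamma(v)|$, the hypothesis $p\nmid m_\Gamma$ makes the exponential factors $|\Gamma(e)|^{\lambda a_e}$ and $|\Gamma(v)|^{\lambda a_v}$ in \eqref{Eq:gExplicit} into $p$-adic units, while \eqref{Eq:EulerCharDecomp} and \eqref{Eq:FreeEuler} combine to yield $\sum_{e} a_e-\sum_{v} a_v=-m_\Gamma\chi(\Gamma)=\mu(\Gamma)-1$. Legendre's formula $v_p(n!)=(n-s_p(n))/(p-1)$, where $s_p$ denotes the base-$p$ digit sum, then gives
\[
v_p\bigl(g_\lambda(\Gamma)\bigr) \,=\, \frac{(\mu(\Gamma)-1)\lambda \,+\, \sum_v s_p(\lambda a_v) \,-\, \sum_e s_p(\lambda a_e)}{p-1}.
\]
The digit-sum terms are $O(\log\lambda)$, so when $\mu(\Gamma)\geq 2$ the valuation tends to $+\infty$ with $\lambda$; hence $g_\lambda(\Gamma)\equiv 0\pmod{p^\alpha}$ for all sufficiently large $\lambda$, and $G_\Gamma(z)$ reduces to a polynomial modulo $p^\alpha$, which suffices.

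The two boundary cases are handled by direct computation from \eqref{Eq:gExplicit}. For $\mu(\Gamma)=0$, the group $\Gamma$ is finite, the only free subgroup of index $m_\Gamma$ is trivial, and so $F_\Gamma(z)=1$. For $\mu(\Gamma)=1$, the classification of finitely generated virtually infinite cyclic groups presents the Stallings decomposition either as a one-vertex HNN type with coincident vertex and edge groups (in which \eqref{Eq:gExplicit} collapses to $g_\lambda(\Gamma)=1$ for all $\lambda$, so $G_\Gamma(z)=1/(1-z)$) or as an amalgam $F_1\ast_H F_2$ with $[F_i:H]=2$ (in which $a_e=2$, $a_{v_i}=1$, and \eqref{Eq:gExplicit} collapses to $g_\lambda(\Gamma)=\binom{2\lambda}{\lambda}/4^\lambda$, so $G_\Gamma(z)=(1-z)^{-1/2}$); in either subcase $F_\Gamma(z)=m_\Gamma G_\Gamma'(z)/G_\Gamma(z)$ is a scalar multiple of $1/(1-z)$ and is rational over $\mathbb{Z}[1/m_\Gamma]$. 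The main obstacle I foresee is precisely the $\mu(\Gamma)=1$ boundary, where the eventual-vanishing mechanism of the principal case is unavailable (the $p$-adic valuation only grows like $O(\log\lambda)$); here one genuinely needs either the above case analysis or a substitute argument showing that the logarithmic derivative $G_\Gamma'/G_\Gamma$ is rational modulo $p^\alpha$ even when $G_\Gamma$ itself is merely algebraic.
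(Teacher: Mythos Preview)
Your overall strategy coincides with the paper's: analyse $v_p\big(g_\lambda(\Gamma)\big)$ via the product formula \eqref{Eq:gExplicit}, show that it tends to infinity when $\mu(\Gamma)\ge 2$, conclude that $G_\Gamma(z)$ is a polynomial modulo $p^\alpha$, and then pass to $F_\Gamma(z)=m_\Gamma G'_\Gamma/G_\Gamma$; the boundary cases $\mu(\Gamma)\in\{0,1\}$ are disposed of exactly as in Corollary~\ref{Cor:mu=1}. Your digit-sum identity
\[
v_p\big(g_\lambda(\Gamma)\big)=\frac{(\mu(\Gamma)-1)\lambda+\sum_v s_p(\lambda a_v)-\sum_e s_p(\lambda a_e)}{p-1}
\]
is a pleasant shortcut for the \emph{asymptotic} statement $v_p\big(g_\lambda(\Gamma)\big)\to\infty$: the paper instead proves the sharper inequality $v_p\big(g_\lambda(\Gamma)\big)\ge v_p(\lambda!)$ (Lemma~\ref{Lem:Main}) by combining the normalisation of the graph of groups (Lemma~\ref{Lem:Normalise}), the tree orientation of Lemma~\ref{Lem:TreeOrient}, and the structural dichotomy of Lemma~\ref{Lem:chi<0Equivs}(vii).

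There is, however, a genuine gap. When you write that ``$G_\Gamma(z)$ is invertible in $(\Z/p^\alpha\Z)[[z]]$'' and that ``$G_\Gamma(z)$ reduces to a polynomial modulo $p^\alpha$'', you are tacitly assuming that $g_\lambda(\Gamma)\in\Z_{(p)}$ for \emph{every} $\lambda\ge 0$, not merely for large $\lambda$. The numbers $g_\lambda(\Gamma)$ are only rational in general (e.g.\ $g_1(\PSL_2(\Z))=5/6$), and your digit-sum formula does not by itself force the numerator $(\mu(\Gamma)-1)\lambda+\sum_v s_p(\lambda a_v)-\sum_e s_p(\lambda a_e)$ to be non-negative for small $\lambda$: the $O(\log\lambda)$ bound on digit sums goes in both directions. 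If some $g_\mu$ with $\mu<\lambda_0$ had $v_p(g_\mu)<0$, then neither the reduction of $G_\Gamma$ modulo $p^\alpha$ nor the linear recurrence
\[
f_\lambda+g_1 f_{\lambda-1}+\cdots+g_{\lambda_0-1}f_{\lambda-\lambda_0+1}\equiv 0\pmod{p^\alpha}
\]
would make sense over $\Z/p^\alpha\Z$, and the passage to ultimate periodicity would break down. The non-negativity $v_p\big(g_\lambda(\Gamma)\big)\ge 0$ is precisely the ``in particular'' clause of Lemma~\ref{Lem:Main}, and its proof there genuinely uses the normalised decomposition (so that every tree edge has index $\ge 2$ at both ends) together with the pairing $e\mapsto t(e)$ from Lemma~\ref{Lem:TreeOrient}; without these structural inputs the factorial ratio $\prod_e(\lambda a_e)!\big/\prod_v(\lambda a_v)!$ is not visibly $p$-integral. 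You should either supply that argument or invoke Lemma~\ref{Lem:Main} directly.
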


The case (iii)$_2$, where $p\mid m_\Gamma$ and $\mu_p(\Gamma)>0$, is dealt
with in our last result. 

\begin{theorem}
\label{Thm:mup>0}
Let $\Gamma$ be a finitely generated virtually free group, let $p$ be
a prime number such that $p\mid m_\Gamma$ and $\mu_p(\Gamma)>0,$
and let $\alpha$ be a positive
integer. Then the generating function $F_\Gamma(z)$ for the free subgroup
numbers of $\Gamma$ is rational modulo~$p^\alpha$. 
If $\mu_p(\Gamma)=1,$ then $F_\Gamma(z)$ is non-polynomial 
modulo~$p^\alpha$ and the rational fraction
can be written so that the denominator is a power of\/ $1-z$ or $1+z$,
depending on whether $(\mu(\Gamma)-\mu_p(\Gamma))/(p-1)$ is even or odd.
If $\mu_p(\Gamma)\ge2,$ then $F_\Gamma(z)$
is polynomial modulo~$p^\alpha$. 
\end{theorem}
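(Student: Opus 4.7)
My plan is to proceed by induction on $\alpha$, with the base case $\alpha=1$ provided by the mod-$p$ rationality of $F_\Gamma(z)$ established (under exactly the hypotheses $p\mid m_\Gamma$ and $\mu_p(\Gamma)>0$) in \cite{MuPuFree}. The inductive engine is the functional equation $F_\Gamma(z)\,G_\Gamma(z) = m_\Gamma\,G'_\Gamma(z)$ equivalent to \eqref{Eq:GFTransform}, combined with the explicit product formula \eqref{Eq:gExplicit} for $g_\lambda(\Gamma)$. The latter permits very precise control of $G_\Gamma(z)$ modulo arbitrary prime powers, which is what will drive the induction.

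The first substantive step is to extract $v_p(g_\lambda(\Gamma))$ by applying Legendre's formula to each factorial in \eqref{Eq:gExplicit}. Using the type identities \eqref{Eq:muTypeRewrite} and \eqref{Eq:mupDef} to re-parametrise, the leading $\lambda$-linear coefficient of $v_p(g_\lambda(\Gamma))$ should come out as $(\mu(\Gamma)-\mu_p(\Gamma))/(p-1)$ when $\mu_p(\Gamma)=1$, and strictly larger when $\mu_p(\Gamma)\geq 2$. In the latter case, this growth of $v_p(g_\lambda)$ propagates via \eqref{Eq:Transform} to unbounded growth of $v_p(f_\lambda(\Gamma))$, yielding polynomial reduction of $F_\Gamma(z)$ modulo any fixed $p^\alpha$. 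When $\mu_p(\Gamma)=1$, the same analysis expresses $G_\Gamma(z)$ modulo $p^{\alpha+1}$ as a rational function whose denominator is a power of $1-z^{p^N}$ or of $1+z^{p^N}$ for an appropriate exponent $N$, which then reduces via the Frobenius identity $(1\pm z)^{p^N}\equiv 1\pm z^{p^N}\pmod p$ to a power of $1-z$ or $1+z$. Which of the two signs occurs is pinned down by evaluating $G_\Gamma(\pm 1)$ modulo $p$ using the base case, and translates into the parity condition on $(\mu(\Gamma)-\mu_p(\Gamma))/(p-1)$ stated in the theorem.

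The inductive step itself runs by writing $F_\Gamma(z) = R_\alpha(z) + p^\alpha H_\alpha(z)$, substituting into $F_\Gamma\,G_\Gamma = m_\Gamma\,G'_\Gamma$, reducing modulo $p^{\alpha+1}$, and using the rational representative of $G_\Gamma$ modulo $p^{\alpha+1}$ obtained above to isolate a linear equation that determines $H_\alpha(z)\pmod p$ as a rational function with the same denominator as $R_\alpha(z)$. The main obstacle, I expect, is exactly this denominator stability: ruling out the emergence of spurious poles at each lift from $p^\alpha$ to $p^{\alpha+1}$ requires sharp estimates on the $p$-adic valuations of the factorials in \eqref{Eq:gExplicit}, in the spirit of Kummer's theorem, and amounts to showing that an appropriate power of $1\pm z$, divided out of $G_\Gamma(z)$, yields a unit in $(\mathbb{Z}/p^{\alpha+1}\mathbb{Z})[[z]]$. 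The precise mod-$p$ normal form of $F_\Gamma(z)$ coming from \cite{MuPuFree} is what initialises this chain of estimates and fixes the correct sign throughout.
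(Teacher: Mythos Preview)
Your route through $G_\Gamma(z)$ is genuinely different from the paper's, and it has a concrete error that undermines the plan.

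\textbf{The valuation claim is false.} You assert that the leading $\lambda$-linear coefficient of $v_p(g_\lambda(\Gamma))$ is $(\mu(\Gamma)-\mu_p(\Gamma))/(p-1)$, obtained by applying Legendre's formula to the factorials in \eqref{Eq:gExplicit}. But when $p\mid m_\Gamma$, the power terms $|\Gamma(v)|^{\lambda m_\Gamma/|\Gamma(v)|}$ and $|\Gamma(e)|^{\lambda m_\Gamma/|\Gamma(e)|}$ also contribute to $v_p(g_\lambda)$, and they exactly cancel part of the factorial contribution. Take the paper's own example $\Gamma=\mathfrak{H}(6)=C_2\ast C_6$ with $p=3$, where $\mu(\Gamma)=3$, $\mu_p(\Gamma)=1$, so your predicted slope is $1$. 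Here
\[
g_\lambda=\frac{(6\lambda)!}{(3\lambda)!\,2^{3\lambda}\,\lambda!\,6^{\lambda}},
\qquad
v_3(g_\lambda)=\tfrac{1}{2}\big(2s_3(\lambda)-s_3(2\lambda)\big)=v_3\!\left(\tbinom{2\lambda}{\lambda}\right),
\]
which is \emph{bounded}, not linear in $\lambda$. Consequently $G_\Gamma(z)$ is not eventually divisible by any fixed $p$-power, and the assertion that a power of $1\pm z$ divided out of $G_\Gamma(z)$ gives a unit in $(\Z/p^{\alpha+1}\Z)[[z]]$ cannot hold as stated. The subsequent claim that $G_\Gamma(z)$ is rational modulo~$p^{\alpha+1}$ with denominator a power of $1\pm z^{p^N}$ is therefore unsupported; indeed, since $F_\Gamma=m_\Gamma\,G'_\Gamma/G_\Gamma$ and $p\mid m_\Gamma$, recovering $G_\Gamma$ modulo a $p$-power from $F_\Gamma$ involves dividing by $p$ and exponentiating, so rationality of $G_\Gamma$ mod $p^{\alpha+1}$ is at least as delicate as the theorem itself. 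The propagation step ``$v_p(g_\lambda)\to\infty$ implies $v_p(f_\lambda)\to\infty$ via \eqref{Eq:Transform}'' in the $\mu_p\ge 2$ case is likewise not automatic: the convolution has $g_0=1$, so growth of $v_p(g_\mu)$ for $\mu\ge 1$ does not immediately force growth of $v_p(f_\lambda)$.

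\textbf{What the paper does instead.} The paper avoids $G_\Gamma$ entirely for this theorem. It uses a Riccati-type functional equation for $F_\Gamma$ alone, taken from \cite{MuPuFree}: modulo~$p$ one has
\[
F_\Gamma(z)\equiv z^{\mu_p(\Gamma)}F_\Gamma^{\mu_p(\Gamma)}(z)\big(z^{p-1}F_\Gamma^{p-1}(z)-1\big)^{M},
\qquad M=\tfrac{\mu(\Gamma)-\mu_p(\Gamma)}{p-1},
\]
with a refinement over $\Z_{(p)}$ in which the discrepancy is $p$ times a polynomial in $z$, $F_\Gamma$, and its derivatives. The base case is then simply $F_\Gamma\equiv 0\pmod p$ (immediate from the displayed equation when $\mu_p>0$). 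Substituting the Ansatz $F_\Gamma=R+p^\alpha Y$ and reducing modulo~$p^{\alpha+1}$ gives, after using $R\equiv 0\pmod p$, the linear equation
\[
\big(-1+(-1)^M\mu_p(\Gamma)\,z^{\mu_p(\Gamma)}R^{\mu_p(\Gamma)-1}(z)\big)\,Y(z)\equiv S(z)\pmod p,
\]
with $S$ rational. For $\mu_p=1$ the bracket is $-1+(-1)^M z$, producing the $1\mp z$ denominator; for $\mu_p\ge 2$ the bracket is $-1$ (since $R\equiv 0$), so $Y$ is polynomial. This is where the sign and the polynomial-versus-rational dichotomy actually come from --- not from any property of $G_\Gamma$.
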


Other ingredients in the proof of Theorem~\ref{Thm:Main} are Corollary~\ref{Cor:mu=1}, which describes the function $f_\lambda(\Gamma)$ in the case where $\Gamma$ is virtually infinite-cyclic, as well as \cite[Prop.~2]{
MuPuFree} and \cite[Theorem~2]{MuPuFree}. 

The proof of Theorem~\ref{Thm:Periodic}, as given in
Section~\ref{Sec:PeriodicThmProof}, is based on the analysis of a second,
rational-valued,  
arithmetic function associated with the group $\Gamma$,
whose $\lambda$-th term may be viewed as the ``normalised"
number of torsion-free $\Gamma$-actions on a set with $\lambda m_\Gamma$ 
elements. This function, 
denoted here by $g_\lambda(\Gamma)$, is connected with the free
subgroup numbers $f_\lambda(\Gamma)$ via the crucial equation
\eqref{Eq:Transform}; see Section~\ref{Sec:Prelim}. A careful $p$-adic
analysis of this equation 
will show in the end that the numbers $f_\lambda(\Gamma)$ satisfy a
linear recurrence modulo any fixed prime power $p^\alpha$ with $p$
not dividing $m_\Gamma$. By standard results on linear recurring
sequences, Theorem~\ref{Thm:Periodic} then follows immediately.

The function $g_\lambda(\Gamma)$ has been discussed in
Section~\ref{Sec:Prelim}, 
together with some further preliminaries on
virtually-free groups. 
Sections~\ref{Sec:Periodic}--\ref{sec:mainlemma} prepare for the
proof of Theorem~\ref{Thm:Periodic} in Section~\ref{Sec:PeriodicThmProof}: 
Section~\ref{Sec:Periodic} surveys the relevant results from the theory
of linear recurring
sequences; Section~\ref{Sec:GoGNorm} provides a representation of a
finitely generated virtually free group in terms of a finite graph of finite
groups without trivial amalgamations, a 
representation which is
particularly suited 
for our subsequent analysis; Section~\ref{Sec:TreeOrient} contains
a purely graph-theoretic lemma on orientation of trees; 
Section~\ref{sec:class} discusses the classification
of virtually free groups of (free) rank at most $2$; Section~\ref{Sec:Large}
collects together criteria for finitely generated virtually free groups to be
`large', while Section~\ref{sec:mainlemma} establishes a crucial 
$p$-divisibility property for this second arithmetic function
$g_\lambda(\Gamma)$.

\medskip
We conclude this section with some remarks.
 
\begin{remarks}
(1) It is shown in \cite{MuPuFree} that, if $\Gamma$ is a finitely
  generated virtually free group with $\mu_p(\Gamma)=0$ for a given
  prime~$p$, then the function $f_\lambda(\Gamma)$ satisfies the
  congruence 
\[
f_\lambda(\Gamma) \equiv (-1)^{\frac{(\mu(\Gamma)-1)}{p-1}}
\lambda^{-1}
\binom{\frac{\mu(\Gamma)\lambda}{p-1}}{\frac{\lambda-1}{p-1}}
\quad \text{mod } p; 
\]
cf.\ \cite[Eqn.~(35)]{MuPuFree}. In general, it remains an open
problem, how the free subgroup numbers of a finitely generated
virtually free group $\Gamma$ with $\mu_p(\Gamma)=0$ behave modulo higher
$p$-powers. The only results known in this direction concern (i) lifts
of Hecke groups $\mathfrak{H}(q) \cong C_2 \ast C_q$ with $q$ a Fermat
prime and $p=2$, and (ii) lifts of the classical modular group
$\mathfrak{H}(3)\cong PSL_2(\Z)$ with $p=3$; see Corollary~34 and Theorem~35 in
\cite{KKM}, and \cite[Sec.~16]{KM}.   

\medskip
(2)  By a \textit{cyclic cover}, we mean the fundamental group $\Gamma$ of
a finite graph $(\Gamma(-), X)$ of finite cyclic groups. To fix ideas,
we shall assume that the canonical embeddings associated with
$(\Gamma(-), X)$ are induced by the identity maps of the corresponding
vertex stabilisers. Let $\Gamma = \pi_1(\Gamma(-), X)$ be a cyclic
cover, and let $\ell$ be  a positive integer. Then we define the
\textit{$\ell$-th lift $\Gamma_\ell$  
of\/ $\Gamma$} as the cyclic cover resulting from $(\Gamma(-), X)$ by multiplying the order of each (vertex or edge) stabiliser by a factor $\ell$. The last assertion in Theorem~\ref{Thm:mup>0} implies that $F_{\Gamma_\ell}(z)$ is polynomial modulo all proper $p$-powers for all lifts $\Gamma_\ell$ of cyclic covers $\Gamma$ with $p \nmid m_\Gamma$, $\mu(\Gamma)\geq2$, and $p\mid \ell$.

\medskip
 (3) In order to illustrate Theorem~\ref{Thm:mup>0}, let us consider
the case where $\Gamma=\mathfrak H(6)\cong C_2\ast C_6$ and $p=3$. 
Indeed, in this example, we have $3\mid m_{\mathfrak H(6)}=6$ and
$\mu_3(\mathfrak H(6))=1$. If one applies the algorithm which is
implicit in the proof of Theorem~\ref{Thm:mup>0} given in
Section~\ref{Sec:prop}, then, modulo~$3^9=19683$, one obtains
\begin{multline*}
F_{\mathfrak H(6)}(z)=
\frac {1} {(1+z)^{10}}
(19680 z^9+585 z^8+1926
   z^7+6165 z^6+7326 z^5\\+1584
   z^4+1566 z^3+17433 z^2+1845
   z+15)
\quad \quad \text{modulo }3^9.
\end{multline*}
Consequently, the period length of the ultimately periodic sequence
$\big(f_\lambda(\mathfrak H(6))\big)_{\lambda\ge1}$ 
is $2\cdot 3^{11}=354294$ when taken modulo~$3^9$.

\medskip
(4) For a finitely generated virtually free group $\Gamma$, denote by
$\hat{\Gamma}$ the isomorphism class of $\Gamma$. Given a prime number
$p$, define a density $\mathfrak{D}_p$ of isomorphism classes
of groups $\Gamma$ with $\mu_p(\Gamma)=0$ in all isomorphism classes by  
\[
\mathfrak{D}_p:= \lim_{M\rightarrow\infty}
\frac{\big\vert\big\{\hat{\Gamma}:\,m_\Gamma\leq M,\, \mu(\Gamma)\leq
  M,\,
  \mu_p(\Gamma)=0\big\}\big\vert}{\big\vert\big\{\hat{\Gamma}:\,m_\Gamma\leq
  M,\, \mu(\Gamma)\leq N\big\}\big\vert}. 
\]
Note that this definition makes sense in view of \cite[Prop.~4]{MuDiss} and Equation~(\ref{Eq:FreeEuler}). We conjecture that $\mathfrak{D}_p=0$ for all prime numbers $p$.

\end{remarks}

\section{Periodicity of sequences over finite rings}
\label{Sec:Periodic}

\noindent In this section we review some standard results on linear
recurring sequences (usually only formulated over finite fields), which will be used in a crucial manner 
in the proof of Theorem~\ref{Thm:Periodic} in
Section~\ref{Sec:PeriodicThmProof}.

Let $\Lambda$ be a finite commutative ring with identity, and let
$\mathcal{S} = (s_n)_{n\geq0}$ be a sequence of elements of
$\Lambda$. Suppose that there exist a positive integer $d$ and elements
$\alpha_0$, $\alpha_1, \ldots, \alpha_{d-1}, \alpha\in\Lambda$, such
that $\mathcal{S}$ satisfies the relation 
\begin{equation}
\label{Eq:LinRec}
s_{n+d} = \alpha_{d-1} s_{n+d-1} + \alpha_{d-2} s_{n+d-2} + \cdots +
\alpha_0 s_n + \alpha,\quad n\geq0. 
\end{equation}
Then $\mathcal{S}$ is termed a \textit{linear recurring sequence} over
$\Lambda$ \textit{of order $d$}, a relation of the form
\eqref{Eq:LinRec} itself is called a \textit{linear recurrence
relation} (or difference relation) \textit{of order
$d$}. Relation~\eqref{Eq:LinRec} is called \textit{homogeneous} if
$\alpha=0$, otherwise \textit{inhomogeneous}; the sequence
$\mathcal{S}$ itself is called a \textit{homogeneous}, or
\textit{inhomogeneous},  \textit{linear recurring sequence} over
$\Lambda$, respectively. 

The sequence $\mathcal{S} = (s_n)_{n\geq0}$ is termed
\textit{ultimately periodic}, if there exist integers $\omega>0$ and
$n_0\geq0$, such that $s_{n+\omega} = s_n$ holds for all $n\geq
n_0$. The integer $\omega$ is then called a \textit{period} of
$\mathcal{S}$. The smallest number among all the possible periods
$\omega$ of an ultimately periodic sequence $\mathcal{S}$ is called
the \textit{least period} $\omega_0=\omega_0(\mathcal{S})$ of
$\mathcal{S}$. If $\mathcal{S} = (s_n)_{n\geq0}$ is ultimately
periodic with least period $\omega_0$, then the least non-negative
integer $n_0$ such that $s_{n+\omega_0}=s_n$ for all $n\geq n_0$ is
called the \textit{preperiod} of $\mathcal{S}$. An ultimately periodic
sequence $\mathcal{S} = (s_n)_{n\geq0}$ with least period
$\omega_0(\mathcal{S})$ is termed \textit{purely periodic}, if
$s_{n+\omega_0(S)}=s_n$ for all $n\geq0$. It is easy to see that a
sequence $\mathcal{S} = (s_n)_{n\geq0}$ is purely periodic, if, and
only if, there exists an integer $\omega>0$ such that
$s_{n+\omega}=s_n$ for all $n\geq0$. Also, every period of an
ultimately periodic sequence is divisible by the least period.  

Linear recurring sequences over finite rings are always (ultimately)
periodic. The following result, which concentrates on the case of a
homogeneous linear recurring sequences, will suffice for our present
purposes.  

\begin{lemma}
\label{Lem:Periodicity}
Let $\mathcal{S} = (s_n)_{n\geq0}$ be a homogeneous linear recurring
sequence of order $d\geq1$ over a finite commutative ring $\Lambda$
with identity. Then $\mathcal{S}$ is ultimately periodic with least
period $\omega_0(\mathcal{S}) < \vert \Lambda\vert^d$. Moreover, if
the linear recurrence relation \eqref{Eq:LinRec} satisfied by
$\mathcal{S}$ is such that $\alpha_0$ is invertible {\em(}i.e., a unit of
$\Lambda${\em)}, then $\mathcal{S}$ is purely periodic.  
\end{lemma}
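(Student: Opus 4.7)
The plan is to recast the scalar recurrence in vector form and exploit pigeonhole on the finite state space $\Lambda^d$. For each $n\geq0$, set $\mathbf{s}_n := (s_n, s_{n+1}, \ldots, s_{n+d-1}) \in \Lambda^d$, and define the shift map $T\colon \Lambda^d \to \Lambda^d$ by
\[
T(x_0, x_1, \ldots, x_{d-1}) := \bigl(x_1, x_2, \ldots, x_{d-1},\, \alpha_{d-1} x_{d-1} + \cdots + \alpha_0 x_0\bigr).
\]
Homogeneity of \eqref{Eq:LinRec} (the case $\alpha=0$) is precisely the statement that $\mathbf{s}_{n+1} = T(\mathbf{s}_n)$ for all $n\geq0$, so that the sequence of state vectors is the $T$-orbit of $\mathbf{s}_0$ inside the finite set $\Lambda^d$.

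For the first assertion, I would observe that $T$ fixes the zero vector $\mathbf{0}$. Hence, if some $\mathbf{s}_n$ equals $\mathbf{0}$, the sequence $\mathcal{S}$ is eventually identically zero and has least period $1<\vert\Lambda\vert^d$. Otherwise, every $\mathbf{s}_n$ lies in $\Lambda^d \setminus \{\mathbf{0}\}$, a set of cardinality $\vert\Lambda\vert^d - 1$. By the pigeonhole principle, among the $\vert\Lambda\vert^d$ vectors $\mathbf{s}_0, \mathbf{s}_1, \ldots, \mathbf{s}_{\vert\Lambda\vert^d - 1}$ at least two must coincide, say $\mathbf{s}_i = \mathbf{s}_j$ with $0\leq i < j \leq \vert\Lambda\vert^d - 1$. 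Iterating $T$ then gives $\mathbf{s}_{n+(j-i)} = \mathbf{s}_n$ for every $n\geq i$, so $\mathcal{S}$ is ultimately periodic with period $j-i \leq \vert\Lambda\vert^d - 1$, and the same bound passes to the least period.

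For the second assertion, when $\alpha_0$ is a unit of $\Lambda$, the relation \eqref{Eq:LinRec} can be solved for $s_n$:
\[
s_n = \alpha_0^{-1}\bigl(s_{n+d} - \alpha_{d-1} s_{n+d-1} - \cdots - \alpha_1 s_{n+1}\bigr),
\]
which exhibits a two-sided inverse for the shift $T$. Thus $T$ is a bijection of the finite set $\Lambda^d$, and so $\Lambda^d$ decomposes into a disjoint union of $T$-cycles. In particular the orbit of $\mathbf{s}_0$ is a cycle, meaning that $T^\omega(\mathbf{s}_0) = \mathbf{s}_0$ for some $\omega>0$; equivalently $s_{n+\omega} = s_n$ for all $n\geq0$, and $\mathcal{S}$ is purely periodic.

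The argument is entirely standard and presents no serious obstacle. The only point requiring a touch of care is the strict inequality $\omega_0(\mathcal{S}) < \vert\Lambda\vert^d$, where one genuinely uses homogeneity: the fact that $\mathbf{0}$ is a $T$-fixed point trims the effective state space down to at most $\vert\Lambda\vert^d - 1$ vectors. For an inhomogeneous recurrence one could at best expect $\omega_0(\mathcal{S}) \leq \vert\Lambda\vert^d$, which is exactly why the lemma is phrased in the homogeneous case.
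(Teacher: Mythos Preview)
Your argument is correct and is precisely the standard state-vector pigeonhole proof. The paper does not actually supply its own argument here; it simply remarks that the proof is virtually identical with the finite-field case and cites Chapter~8 of Lidl--Niederreiter (Theorems~8.7 and 8.11, Lemma~8.12), where exactly this reasoning is carried out. So your write-up matches what the paper has in mind.
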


The proof of Lemma~\ref{Lem:Periodicity} is virtually identical with
that in the case of finite fields; see Chapter~8 in \cite{LiNie}, in
particular Theorems~8.7 and 8.11, and Lemma~8.12.

\section{Normalising a finite graph of groups}
\label{Sec:GoGNorm}
\noindent It will be helpful to be able to represent a finitely generated
virtually free group $\Gamma$ by a graph of groups avoiding trivial
amalgamations. This is achieved via the following. 

\begin{lemma}[\sc Normalisation]
\label{Lem:Normalise}
Let $(\Gamma(-), X)$ be a {\em(}connected{\em)} graph of groups with fundamental
group $\Gamma,$ and suppose that $X$ has only finitely many
vertices. Then there exists a graph of groups $(\Delta(-), Y)$ with
$\vert V(Y)\vert < \infty$ and a spanning tree $T$ in $Y,$ such that
$\pi_1(\Delta(-),Y) \cong \Gamma$, and such that\footnote{The notation 
used in Equation~\eqref{eq:normal} follows Serre; 
see D\'ef.~8 in \cite[Sec.~4.4]{Serre2}.} 
\begin{equation} \label{eq:normal}
\Delta(e)^e \neq \Delta(t(e))\,\mbox{ and }\, \Delta(e)^{\bar{e}} \neq
\Delta(o(e)),\quad 
\text {for }e\in E(T). 
\end{equation}
Moreover, if $(\Gamma(-), X)$ satisfies the finiteness condition
\bigskip

\centerline{\hskip3cm
$(F_1)$ \qquad \mbox{$X$ is a finite graph,}\hfill}
\bigskip

\noindent
or
\bigskip

\centerline{\hskip3cm $(F_2)$  \qquad 
\mbox{$\Gamma(v)$ is finite for every vertex $v\in V(X),$}\hfill}
\bigskip

\noindent
then we may choose $(\Delta(-), Y)$ so as to enjoy the same property.
\end{lemma}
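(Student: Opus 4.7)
The plan is to iteratively contract edges of a chosen spanning tree whose associated amalgamation is trivial, in the sense that one of the two inclusions $\Gamma(e)^e\subseteq\Gamma(t(e))$ or $\Gamma(e)^{\bar e}\subseteq\Gamma(o(e))$ is actually an equality. Each such contraction will yield a graph of groups with the same fundamental group but one fewer vertex, so, since $|V(X)|<\infty$, the process must terminate in a graph of groups satisfying \eqref{eq:normal}.

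To set up the contraction step, first I would choose a spanning tree $T_0$ of $X$, which exists and has at most $|V(X)|-1$ edges regardless of whether $E(X)$ is finite. Suppose some edge $e\in E(T_0)$ violates \eqref{eq:normal}; by the symmetry between $e$ and $\bar e$ we may assume $\Gamma(e)^e=\Gamma(t(e))$. In the Bass--Serre presentation of $\pi_1(\Gamma(-),X)$ relative to $T_0$, the corresponding tree relation $\Gamma(e)^e=\Gamma(e)^{\bar e}$ exhibits $\Gamma(t(e))$ as the subgroup $\Gamma(e)^{\bar e}$ of $\Gamma(o(e))$ via the composite $\Gamma(t(e))=\Gamma(e)^e\leftarrow\Gamma(e)\rightarrow\Gamma(e)^{\bar e}\hookrightarrow\Gamma(o(e))$, which is a well-defined monomorphism $\iota\colon\Gamma(t(e))\hookrightarrow\Gamma(o(e))$. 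Accordingly, I would form a new graph of groups $(\Gamma'(-),X')$ by removing the geometric edge $\{e,\bar e\}$, identifying $t(e)$ with $o(e)$ to form a single vertex $v^*$ with vertex group $\Gamma(o(e))$, and re-routing every remaining edge $e'$ previously incident to $t(e)$ through $v^*$ by post-composing its boundary monomorphism at $t(e)$ with $\iota$. Deleting $e$ from $T_0$ yields a spanning tree $T'$ of $X'$, and comparing the two Bass--Serre presentations (before and after the formal elimination of the redundant copy of $\Gamma(t(e))$) gives $\pi_1(\Gamma'(-),X')\cong\Gamma$.

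Iterating this construction as long as some tree edge violates \eqref{eq:normal} produces, after at most $|V(X)|-1$ steps, the desired $(\Delta(-),Y)$ and spanning tree $T$. Both finiteness hypotheses are preserved: each contraction decreases the number of geometric edges and of vertices by one, so $(F_1)$ is inherited; and the vertex group attached to $v^*$ is $\Gamma(o(e))$, already finite under $(F_2)$. The main technical point is the contraction step itself -- specifically, one must verify that the re-routed edges genuinely define a graph of groups in Serre's sense (the new boundary maps remain injective into the vertex group at $v^*$, which is immediate because $\iota$ is an injection and the original boundary map $\Gamma(e')^{e'}\hookrightarrow\Gamma(t(e))$ was one) and that the presentation transforms under the formal elimination exactly as claimed. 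Once this bookkeeping is checked, the resulting isomorphism of fundamental groups is automatic, and the inductive argument concludes the proof.
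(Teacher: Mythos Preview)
Your proposal is correct and follows essentially the same approach as the paper: iteratively contract tree edges whose edge group surjects onto one endpoint's vertex group, re-routing the remaining edge embeddings through the composite monomorphism, and verify via the Bass--Serre presentation (Tietze moves) that the fundamental group is unchanged. The paper's argument is organised identically, including the preservation of $(F_1)$ and $(F_2)$.
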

\begin{proof}
Choose a 
spanning tree $S$ in $X$, and call an edge $e\in E(S)$
\textit{trivial}, if at least one of the associated embeddings $e:
\Gamma(e) \rightarrow \Gamma(t(e))$ and $\bar{e}: \Gamma(e)
\rightarrow \Gamma(o(e))$ is an isomorphism. If $S$ contains a trivial
edge $e_1$, to fix ideas, say $\Gamma(e_1)^{e_1} = \Gamma(t(e_1))$,
then we contract the edge $e_1$ into the vertex $o(e_1)$ and re-define
incidence and embeddings where necessary, to obtain a new graph of
groups $(\Gamma'(-), X')$ with spanning tree $S'$ in $X'$. More
precisely, this means that we let  
\begin{align*}
E(X') &= E(X) \setminus \{e_1, \bar{e}_1\},\\[1mm]
E(S') &= E(S) \setminus \{e_1, \bar{e}_1\},\\[1mm]
V(X') &= V(S') = V(X)\setminus \{t(e_1)\},
\end{align*}
set 
\[
t'(e):= o(e_1), \quad \text {for }e\in E(X')\text { with }t(e) = t(e_1),
\] 
and define new embeddings via
\begin{equation}
\label{Eq:ImbedExt}
\Gamma(e) \overset{e}{\longrightarrow} \Gamma(t(e_1))
\overset{e_1^{-1}}{\longrightarrow} \Gamma(e_1)
\overset{\bar{e}_1}{\longrightarrow} \Gamma(o(e_1)) = \Gamma(t'(e)),
\quad \text {for }e\in E(X')\text { with } t(e)=t(e_1), 
\end{equation}
leaving incidence and embeddings unchanged wherever possible. Clearly,
$S'$, the result of contracting the geometric edge $\{e_1,
\bar{e}_1\}$ and deleting the vertex $t(e_1)$, is still a spanning
tree  for $X'$ and, if $(\Gamma(-),X)$ has property $(F_1)$ or
$(F_2)$, then so does $(\Gamma'(-), X')$ by construction. 

It remains to see that the fundamental group of the new graph of
groups $(\Gamma'(-), X')$ is isomorphic to $\Gamma$. The fundamental
group  
\[
\pi_1(\Gamma(-),X,S) 
\]
of the graph of groups $(\Gamma(-),X)$ at the 
spanning tree $S$ is
generated by the groups $\Gamma(v)$ for $v\in V(X)$ plus extra
generators $\gamma_e$ for $e\in \mathcal{O}(X)-E(S)$, where
$\mathcal{O}(X)$ is any orientation of $X$, subject to the relations 
\begin{align}
a^e &= a^{\bar{e}},\quad \text {for }e\in \mathcal{O}(S)\text { and } a\in
\Gamma(e),\label{Eq:GammaPres1}\\[1mm] 
\gamma_e a^e \gamma_e^{-1} &= a^{\bar{e}},\quad \text {for }e\in
\mathcal{O}(X)-E(S)\text { and } a\in \Gamma(e),\label{Eq:GammaPres2} 
\end{align}
where $\mathcal{O}(S)$ is the orientation of the tree $S$ induced by
$\mathcal{O}(X)$, with a corresponding presentation for
$\pi_1(\Gamma'(-),X',S')$;  
see \S5.1 in \cite[Chap.~I]{Serre2}.  The
relations~\eqref{Eq:GammaPres1} corresponding to the geometric edge
$\{e_1, \bar{e}_1\}$ identify $\Gamma(t(e_1))$ isomorphically with a
subgroup of $\Gamma(o(e_1))$; we can thus delete the generators
$\gamma\in \Gamma(t(e_1))$ against those relations by Tietze
moves. This yields a presentation for $\pi_1(\Gamma(-), X,S)$ with the
same set of generators as $\pi_1(\Gamma'(-), X',S')$. Moreover, those
relations \eqref{Eq:GammaPres1}--\eqref{Eq:GammaPres2} coming from
edges $e$ with $t(e)=t(e_1)$ have to be re-expressed in terms of
elements of $\Gamma(o(e_1))$, which leads exactly to the corresponding
relations of $\pi_1(\Gamma'(-), X',S')$ obtained by extending the
embedding $e: \Gamma(e) \rightarrow \Gamma(t(e_1))$ in the natural way
as given in \eqref{Eq:ImbedExt}. Hence, $\pi_1(\Gamma(-), X,S) \cong
\pi_1(\Gamma'(-), X', S')$. Since $V(X)$ is finite, the tree $S$ is
finite; thus, proceeding in the manner described, we obtain, after
finitely many steps, a graph of groups $(\Delta(-), Y)$ with
fundamental group $\Gamma$ and a spanning tree $T$ in $Y$ without
trivial edges, such that $(\Delta(-),Y)$ enjoys the finiteness
properties $(F_1), (F_2)$ whenever $(\Gamma(-),X)$ does.  
\end{proof}

\section{A graph-theoretic lemma}
\label{Sec:TreeOrient}

\noindent The following auxiliary result, which is of an entirely 
graph-theoretic nature, will be used frequently in 
the next two sections.

\begin{lemma}
\label{Lem:TreeOrient}
Let $T$ be a tree, and let $v_0\in V(T)$ be any vertex. Then there
exists one, and only one, orientation $\mathcal{O}(T)$ of $T,$ such
that the assignment $e\mapsto t(e)$ defines a bijection $\psi_{v_0}:
\mathcal{O}(T) \rightarrow V(T)\setminus\{v_0\}$. This orientation is
obtained by orienting each geometric edge so as to point away from the
root $v_0;$ that is, travelling along an oriented edge, the distance
from $v_0$ in the path metric always  increases.  
\end{lemma}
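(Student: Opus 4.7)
The plan is to prove existence and uniqueness separately, both resting on the fact that in a tree $T$ there is a unique reduced edge-path between any two vertices, yielding a well-defined distance function $d(v_0, \cdot)$ on $V(T)$, together with the auxiliary observation that for any geometric edge $\{e, \bar e\}$ the two distances $d(v_0, o(e))$ and $d(v_0, t(e))$ differ by exactly $1$ (since adjoining $\{e, \bar e\}$ to the geodesic from $v_0$ to whichever endpoint is closer yields the geodesic to the other). For the existence part, I would define $\mathcal{O}(T)$ by choosing from each pair $\{e, \bar e\}$ the oriented representative whose terminus is the endpoint farther from $v_0$. The bijectivity of $\psi_{v_0}\colon \mathcal{O}(T) \to V(T) \setminus \{v_0\}$ then follows because, for every $v \neq v_0$, the final edge of the unique geodesic from $v_0$ to $v$ is, by construction, the unique member of $\mathcal{O}(T)$ with terminus $v$.

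For uniqueness, let $\mathcal{O}'(T)$ be any orientation making $\psi_{v_0}$ a bijection. Since $v_0$ cannot lie in the image of $\psi_{v_0}$, no edge of $\mathcal{O}'(T)$ terminates at $v_0$. Say a geometric edge $\{e, \bar e\}$ is \emph{wrongly oriented} in $\mathcal{O}'(T)$ if its chosen representative terminates at the endpoint closer to $v_0$. Assuming for contradiction that some such edge exists, pick an $f \in \mathcal{O}'(T)$ minimising $d(v_0, t(f))$ among wrongly oriented edges, and set $u = t(f)$, $w = o(f)$. Since $u \neq v_0$, let $u'$ be the penultimate vertex on the geodesic from $v_0$ to $u$; the geometric edge $\{u', u\}$ is distinct from $\{f, \bar f\}$ because the strict inequalities $d(v_0, u') < d(v_0, u) < d(v_0, w)$ force $u' \neq w$. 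By the minimality assumption, $\{u', u\}$ is not wrongly oriented, so its representative in $\mathcal{O}'(T)$ has terminus $u$, giving two distinct oriented edges of $\mathcal{O}'(T)$ with terminus $u$ and contradicting injectivity of $\psi_{v_0}$.

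The argument is entirely uniform in whether $T$ is finite or infinite, since only the distance function and local geodesic structure are invoked. The only subtlety I anticipate is the bookkeeping needed to ensure that the two oriented edges terminating at $u$ really come from distinct geometric edges — which is exactly what the strict distance inequalities deliver — and to handle Serre's formal pair structure $\{e, \bar e\}$ consistently throughout; neither issue is substantive, so the whole argument should be short and self-contained.
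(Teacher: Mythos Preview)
Your proof is correct. It differs from the paper's approach: the paper sketches a proof by induction on $\vert V(T)\vert$ for finite trees (deleting the root $v_0$, taking the neighbours of $v_0$ as new roots in the resulting forest, and applying the induction hypothesis), while merely remarking that the general case is ``easy to show.'' You instead give a direct, non-inductive argument based on the path metric: the orientation is defined explicitly via distance comparison, bijectivity is read off from the last edge of the geodesic, and uniqueness is obtained by a minimal-counterexample argument. Your route has the advantage of treating finite and infinite trees uniformly without any case distinction, and it makes the role of the tree hypothesis (uniqueness of geodesics, hence a unique ``parent'' neighbour for each $v\ne v_0$) entirely explicit. The paper's inductive argument is perhaps more in the spirit of an algorithmic construction, but for the purposes of the paper---where only finite trees are needed---either argument is equally serviceable.
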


Lemma~\ref{Lem:TreeOrient} is easy to show, even in this
generality. Moreover, for our present purposes, the trees considered
will all be finite, in which case the assertion of
Lemma~\ref{Lem:TreeOrient} may be proved by a straightforward
induction on $\vert V(T)\vert$, which we sketch briefly: by our
condition on the map $\psi_{v_0}$, all (geometric) edges incident with
$v_0$ will have to be oriented away from the root $v_0$.  Delete $v_0$
together with edges incident to $v_0$. The result is a disjoint union
of finitely many subtrees, in which we choose the (previous)
neighbours of $v_0$ as new roots. An application of the induction
hypothesis to these rooted subtrees now finishes the proof.  

\medskip

In what follows, the orientation of a tree $T$ with respect to a base
point $v_0$ described in Lemma~\ref{Lem:TreeOrient} will be denoted by
$\mathcal{O}_{v_0}(T)$.

\section{Classifying virtually free groups of small rank}
\label{sec:class}
\noindent Let $\Gamma$ be a finitely generated virtually free group, and let
$\mu(\Gamma)$ be its free rank, as defined in
Section~\ref{Sec:Prelim}. Then $\Gamma$ is finite if $\mu(\Gamma)=0$,
virtually infinite-cyclic 
if $\mu(\Gamma)=1$, and large in the sense of Pride's preorder on
groups if $\mu(\Gamma)\geq2$. Virtually infinite-cyclic groups play a
certain role in topology as they are precisely the finitely generated
groups with two ends. Their structure is well-known;
cf.\ \cite[5.1]{Stallings} or \cite[Lemma~4.1]{Wall}. Here, we shall
give a short proof of the corresponding result based on the tools
developed in Sections~\ref{Sec:GoGNorm} and \ref{Sec:TreeOrient}.  

\begin{proposition}
\label{Prop:InfCyc}
A virtually infinite-cyclic group $\Gamma$ falls into one of the
following two classes: 
\begin{enumerate}
\item[(i)] $\Gamma$ has a finite normal subgroup with infinite-cyclic quotient.
\vspace{2mm}

\item[(ii)] $\Gamma$ is a free product $\Gamma = G_1
\underset{A}{\ast} G_2$ of two finite groups $G_1$ and $G_2$, with an
amalgamated subgroup $A$ of index $2$ in both factors.  
\end{enumerate}
\end{proposition}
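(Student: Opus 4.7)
The plan is to recast the hypothesis that $\Gamma$ is virtually infinite cyclic as the vanishing of the Euler characteristic $\chi(\Gamma)$, and then to extract the structural conclusions via the Normalisation Lemma~\ref{Lem:Normalise} combined with Lemma~\ref{Lem:TreeOrient}. A virtually infinite cyclic group is finitely generated and virtually free, so I may write $\Gamma \cong \pi_1(\Delta(-), Y)$ for a finite graph $Y$ of finite groups; applying Lemma~\ref{Lem:Normalise}, I may further assume that $Y$ admits a spanning tree $T$ all of whose edges are non-trivial in the sense of \eqref{eq:normal}. The assumption that $\Gamma$ is virtually infinite cyclic translates to $\mu(\Gamma) = 1$, which by \eqref{Eq:FreeEuler} is equivalent to $\chi(\Gamma) = 0$.

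Pick a vertex $v_0 \in V(Y)$ at which $|\Delta(v)|$ is maximal, let $E'$ be the set of geometric edges of $Y$ not lying in $T$, and orient $T$ according to Lemma~\ref{Lem:TreeOrient} as $\mathcal{O}_{v_0}(T)$. Using the bijection $e \mapsto t(e)$ from $\mathcal{O}_{v_0}(T)$ onto $V(Y) \setminus \{v_0\}$, the identity $\chi(\Gamma) = 0$ provided by \eqref{Eq:EulerCharDecomp} becomes
\begin{equation*}
\frac{1}{|\Delta(v_0)|} \;=\; \sum_{e \in \mathcal{O}_{v_0}(T)} \left(\frac{1}{|\Delta(e)|} - \frac{1}{|\Delta(t(e))|}\right) \;+\; \sum_{e \in E'} \frac{1}{|\Delta(e)|}.
\end{equation*}
Each tree term dominates $1/|\Delta(t(e))|$ (since normalisation gives $[\Delta(t(e)) : \Delta(e)] \geq 2$, hence $1/|\Delta(e)| \geq 2/|\Delta(t(e))|$), and in turn $1/|\Delta(t(e))| \geq 1/|\Delta(v_0)|$ by the maximality of $|\Delta(v_0)|$; each $E'$-term satisfies $1/|\Delta(e)| \geq 1/|\Delta(v_0)|$ because $\Delta(e)$ is a subgroup of $\Delta(o(e))$ and $|\Delta(o(e))| \leq |\Delta(v_0)|$. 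Summing these lower bounds forces $|V(Y)| + |E'| \leq 2$, so at most three configurations can occur.

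I then dispose of the three configurations in turn. If $(|V(Y)|,|E'|) = (1,0)$, then $Y$ is a single point and $\chi(\Gamma) = 1/|\Delta(v_0)| > 0$, contradicting $\chi(\Gamma) = 0$. If $(|V(Y)|,|E'|) = (1,1)$, then $Y$ is a loop at $v_0$ and the Euler identity reads $1/|\Delta(v_0)| = 1/|\Delta(e_0)|$, forcing both edge embeddings $\Delta(e_0) \hookrightarrow \Delta(v_0)$ to be isomorphisms; the standard HNN presentation of $\pi_1(\Delta(-),Y)$ then exhibits $\Gamma$ as $\Delta(v_0) \rtimes \mathbb{Z}$, placing $\Gamma$ in class (i). If $(|V(Y)|,|E'|) = (2,0)$, then $Y$ is a single tree edge $e$ joining vertices $v_0$ and $v_1$; writing $a = [\Delta(v_0):\Delta(e)]$ and $b = [\Delta(v_1):\Delta(e)]$, both of which are at least $2$ by normalisation, the Euler identity reduces to $1/a + 1/b = 1$, so $a = b = 2$, which is precisely class (ii) with $G_i = \Delta(v_i)$ and $A = \Delta(e)$. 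The main thing to get right is the joint choice of the root $v_0$ and the induced orientation of $T$, since the collapse to $|V(Y)|+|E'|\le 2$ relies on every summand of the normalised Euler identity dominating $1/|\Delta(v_0)|$; the fact that Lemma~\ref{Lem:Normalise} imposes no constraint on loops (or other non-tree edges) is absorbed into the separate treatment of $E'$ and causes no loss.
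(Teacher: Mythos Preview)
Your proof is correct and rests on the same ingredients as the paper's argument---the Normalisation Lemma, the rooted orientation of $T$ from Lemma~\ref{Lem:TreeOrient}, and the vanishing of $\chi(\Gamma)$ read off from \eqref{Eq:EulerCharDecomp}. The one genuine variation is your choice of the root $v_0$ as a vertex of \emph{maximal} stabiliser order: this yields the uniform lower bound $1/|\Delta(v_0)|$ for every summand, and hence the clean counting inequality $|V(Y)|+|E'|\le 2$, after which the three residual configurations are dispatched at once. The paper instead splits at the outset into the cases $|V(X)|=1$ and $|V(X)|\ge2$; in the latter it picks an arbitrary tree edge $e_1$, breaks $\chi(\Gamma)$ into three sign-definite pieces (the $e_1$-triangle, the remaining tree pairs, and the non-tree edges), and forces each to vanish. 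Your maximality trick buys a slightly more unified presentation and avoids the preliminary case split, while the paper's decomposition makes the equality cases transparent without any extremal choice of root; the substance of the two arguments is the same.
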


\begin{remark}
\label{Rem:InfCycii}
In Part~(ii) of Proposition~\ref{Prop:InfCyc}, $A$ is a
finite normal subgroup of $\Gamma$ with quotient the infinite dihedral
group $C_2\ast C_2$. 
\end{remark}
\begin{proof}[Proof of Proposition~{\em \ref{Prop:InfCyc}}]
Let $(\Gamma(-),X)$ be a finite graph of finite groups with
fundamental group $\Gamma$ and spanning tree $T$, chosen according to
Lemma~\ref{Lem:Normalise}. The reader should observe that the
assumption that $\Gamma$ is 
virtually infinite-cyclic in combination with \eqref{Eq:FreeEuler} implies that
$\chi(\Gamma)=0$.  

If $\vert V(X)\vert = 1$, $V(X)=\{v\}$ say, then
the above observation together with 
Formula~\eqref{Eq:EulerCharDecomp} show that $X$ has exactly one
geometric edge $\{e, \bar{e}\}$, and that the associated embedding $e:
\Gamma(e)\rightarrow \Gamma(v)$ is an isomorphism. Hence, $\Gamma(v)
\unlhd \Gamma$ and $\Gamma/\Gamma(v) \cong C_\infty$, which gives the
desired result in case (i). 

If $\vert V(X)\vert >1$, we choose an edge $e_1\in E(T)$, introduce
the orientation $\mathcal{O}_{v_0}(T)$ with respect to the base point
$v_0=o(e_1)$, extend it to an orientation $\mathcal{O}(X)$ of $X$,
and let $v_1=t(e_1)$. We then split the Euler characteristic of $\Gamma$ 
as follows: 
\begin{multline}
\label{Eq:chiSplit}
0 = \chi(\Gamma)  
= \underset{v\neq v_0, v_1}{\sum_{v\in V(X)}} \frac{1}{\vert
\Gamma(v)\vert}\,-\,\underset{e\neq
e_1}{\sum_{e\in\mathcal{O}_{v_0}(T)}} \frac{1}{\vert
\Gamma(e)\vert}\,\,+\,\,\Big(\frac{1}{\vert \Gamma(v_0)\vert} +
\frac{1}{\vert \Gamma(v_1)\vert} - \frac{1}{\vert
\Gamma(e_1)\vert}\Big) \\[1mm] 
-\sum_{e\in\mathcal{O}(X)\setminus \mathcal{O}_{v_0}(T)}
\frac{1}{\vert\Gamma(e)\vert}. 
\end{multline}
By the normalisation condition 
\eqref{eq:normal} 
on $(\Gamma(-), X)$, we have 
\[
2 \vert \Gamma(e_1)\vert\, \leq \gamma:=
\min\big\{\vert\Gamma(v_0)\vert, \,\Gamma(v_1)\vert\big\}, 
\]
so
\begin{equation}
\label{Eq:chi=0First}
\frac{1}{\vert \Gamma(v_0)\vert}\, +\,
\frac{1}{\vert\Gamma(v_1)\vert}\, -\, \frac{1}{\vert
\Gamma(e_1)\vert}\, \leq\, \frac{2}{\gamma}\, -\, \frac{1}{\vert
\Gamma(e_1)\vert}\, \leq\, 0. 
\end{equation}
Clearly, equality in \eqref{Eq:chi=0First} occurs if, and only if,
$\Gamma(e_1)$ is of index $2$ in both $\Gamma(v_0)$ and
$\Gamma(v_1)$. 
Similarly, by the normalisation condition \eqref{eq:normal}
and Lemma~\ref{Lem:TreeOrient},
\[
\underset{v\neq v_0, v_1}{\sum_{v\in V(X)}} \frac{1}{\vert
\Gamma(v)\vert}\,-\,\underset{e\neq
e_1}{\sum_{e\in\mathcal{O}_{v_0}(T)}} \frac{1}{\vert
\Gamma(e)\vert}\,=\, \underset{e\neq
e_1}{\sum_{e\in\mathcal{O}_{v_0}(T)}}
\Big(\frac{1}{\vert\Gamma(t(e))\vert} -
\frac{1}{\vert\Gamma(e)\vert}\Big) \leq 0, 
\]
with equality if, and only if, $\mathcal{O}_{v_0}(T) = \{e_1\}$. Also,
trivially, the last sum on the right-hand side of \eqref{Eq:chiSplit}
is non-negative, and vanishes if, and only if,  
 $\mathcal{O}(X)=\mathcal{O}_{v_0}(T)$. Given this discussion, we
conclude from \eqref{Eq:chiSplit} that $\Gamma = \Gamma(v_0)
\underset{\Gamma(e_1)}{\ast} \Gamma(v_1)$, the amalgam being formed
with respect to the embeddings $e_1: \Gamma(e_1) \rightarrow
\Gamma(v_1)$ and $\bar{e}_1: \Gamma(e_1)\rightarrow \Gamma(v_0)$, and
that $(\Gamma(v_0):\Gamma(e_1)^{\bar{e}_1}) = 2 =
(\Gamma(v_1):\Gamma(e_1)^{e_1})$, whence the result in case (ii).  
\end{proof}

\begin{corollary}
\label{Cor:mu=1}
If $\Gamma$ is  virtually infinite-cyclic, then the function
$f_\lambda(\Gamma)$ is constant. More precisely, we have
$f_\lambda(\Gamma)=m_\Gamma$ for $\lambda\geq1$ in   Case~{\em (i)} of
Proposition~{\em \ref{Prop:InfCyc},} while in Case~{\em (ii)} we have
$f_\lambda(\Gamma) = \vert A\vert = m_\Gamma/2$. 
\end{corollary}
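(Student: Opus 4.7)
The plan is to reduce to the two cases provided by Proposition~\ref{Prop:InfCyc} and, in each case, to compute $F_\Gamma(z)$ directly via the generating function apparatus of Section~\ref{Sec:Prelim}. Concretely, I will read off a Stallings decomposition of $\Gamma$ from the proof of Proposition~\ref{Prop:InfCyc}, evaluate formula~\eqref{Eq:gExplicit} to obtain a closed form for $g_\lambda(\Gamma)$, recognise the associated series $G_\Gamma(z)$, and then apply the transform~\eqref{Eq:GFTransform} to recover $F_\Gamma(z)$. Constancy of $f_\lambda(\Gamma)$ amounts to $F_\Gamma(z)$ being a scalar multiple of $1/(1-z)$.

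In case~(i), the proof of Proposition~\ref{Prop:InfCyc} produces a Stallings decomposition with a single vertex $v$ and a single geometric edge $\{e,\bar e\}$ (necessarily a loop), the embedding $\Gamma(e)\hookrightarrow\Gamma(v)$ being an isomorphism. In particular, $\vert\Gamma(e)\vert=\vert\Gamma(v)\vert=m_\Gamma$, so formula~\eqref{Eq:gExplicit} collapses to $g_\lambda(\Gamma)=1$, giving $G_\Gamma(z)=1/(1-z)$; then~\eqref{Eq:GFTransform} yields $F_\Gamma(z)=m_\Gamma/(1-z)$, whence $f_\lambda(\Gamma)=m_\Gamma$ for every $\lambda\geq1$.

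In case~(ii), the Stallings decomposition has two vertices with vertex groups $G_1,G_2$ of order $2\vert A\vert$ and a single geometric edge with edge group $A$ of order $\vert A\vert$, so $m_\Gamma=2\vert A\vert$. Formula~\eqref{Eq:gExplicit} then evaluates to
\[
g_\lambda(\Gamma)=\frac{(2\lambda)!\,\vert A\vert^{2\lambda}}{(\lambda!)^2\,(2\vert A\vert)^{2\lambda}}=\binom{2\lambda}{\lambda}4^{-\lambda},
\]
whence $G_\Gamma(z)=(1-z)^{-1/2}$ by the standard binomial expansion; applying~\eqref{Eq:GFTransform} then gives $F_\Gamma(z)=2\vert A\vert\cdot\frac{d}{dz}\bigl(-\tfrac12\log(1-z)\bigr)=\vert A\vert/(1-z)$, so $f_\lambda(\Gamma)=\vert A\vert=m_\Gamma/2$ for every $\lambda\geq1$. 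No genuine obstacle is anticipated here: the proof reduces to elementary substitution into~\eqref{Eq:gExplicit} and summation of a familiar power series. The only subtlety worth flagging is that the specific Stallings decompositions used above are essentially dictated by the analysis in the proof of Proposition~\ref{Prop:InfCyc} (together with the normalisation from Lemma~\ref{Lem:Normalise}) rather than being chosen freely, which is what ensures the vertex and edge stabilisers take the stated sizes.
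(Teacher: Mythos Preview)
Your proof is correct and follows essentially the same route as the paper: compute $g_\lambda(\Gamma)$ from \eqref{Eq:gExplicit} using the Stallings decomposition produced in the proof of Proposition~\ref{Prop:InfCyc}, identify $G_\Gamma(z)$, and pass to $F_\Gamma(z)$. The only cosmetic difference is that in Case~(i) the paper derives $f_\lambda(\Gamma)=m_\Gamma$ directly from the recurrence~\eqref{Eq:Transform} by induction rather than via~\eqref{Eq:GFTransform}, but the two arguments are equivalent.
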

\begin{proof}
If $\Gamma$ is as described in Case~(i) of
Proposition~\ref{Prop:InfCyc}, then \eqref{Eq:gExplicit} shows that
$g_\lambda(\Gamma)=1$ for $\lambda\geq0$, leading to
$f_\lambda(\Gamma)=m_\Gamma$ for all $\lambda\geq	1$ by
\eqref{Eq:Transform} and an immediate induction on $\lambda$. 

For $\Gamma$ as in Case~(ii), Equation~\eqref{Eq:gExplicit} yields 
\[
g_\lambda(\Gamma) = 2^{-2\lambda} \binom{2\lambda}{\lambda},\quad \lambda\geq0.
\]
By the binomial theorem applied to the generating function 
$G_\Gamma(z)$ of the $g_\lambda(\Gamma)$'s, we obtain 
$G_\Gamma(z)=(1-z)^{-1/2}$,
which transforms into
the relation
\[
F_\Gamma(z) 
= \frac{\vert m_\Gamma\vert}{2(1-z)}
= \frac{\vert A\vert}{1-z}
\]
via \eqref{Eq:GFTransform}. The desired result follows from this last
equation by  comparing coefficients. 
\end{proof}

By an argument similar to that in the proof of
Proposition~\ref{Prop:InfCyc}, again based on 
Lemmas~\ref{Lem:Normalise} and \ref{Lem:TreeOrient}, one also obtains a
structural classification of the virtually free groups $\Gamma$ of
free rank $\mu(\Gamma)=2$. We confine ourselves to stating the result,
leaving details of the (straightforward if somewhat cumbersome) proof
to the interested reader. 

\begin{proposition}
\label{Prop:mu=2}
A virtually free group $\Gamma$ of rank $\mu(\Gamma)=2$ falls into one
of the following five classes: 
\begin{enumerate}
\item[(i)] $\Gamma$ is an $HNN$-extension $\Gamma =
G\underset{S,\sigma}{\ast}$ with finite base group $G$ and $(G:S)=2$. 
\vspace{2mm}

\item[(ii)] $\Gamma$ contains a finite normal subgroup $G$ with
quotient $\Gamma/G \cong F_2$ free of rank $2$. 
\vspace{2mm}

\item[(iii)] $\Gamma$ is a free product $\Gamma = G_1
\underset{S}{\ast} G_2$ of two finite groups $G_i$ with an amalgamated
subgroup $S,$ whose indices $(G_i:S)$ satisfy one of the conditions 
\vspace{1mm}

\begin{enumerate}
\item[(iii)$_1$] $\{(G_1:S),\,(G_2:S)\} = \{2,3\},$
\vspace{1mm}

\item[(iii)$_2$] $(G_1:S) = 3 = (G_2:S),$
\vspace{1mm}
\item[(iii)$_3$] $\{(G_1:S),\, (G_2:S)\} = \{2, 4\}$.
\end{enumerate}
\vspace{2mm}

\item[(iv)] $\Gamma$ is of the form $\Gamma = (G_1
\underset{G_{12}}{\ast} G_2) \underset{G_{23}}{\ast} G_3$ with finite
factors $G_i$ and indices\\ $(G_1:G_{12}) = 2 = (G_2:G_{12})$ and
$(G_2:G_{23}) = 2 = (G_3: G_{23})$. 
\vspace{2mm}

\item[(v)] $\Gamma$ is a free product $\Gamma = G_1 \underset{S}{\ast}
G_2,$ where $G_1$ contains a finite normal subgroup $H_1$ with $S\leq
H_1$ and $G_1/H_1 \cong C_\infty,$ $G_2$ is finite, and $(H_1:S) = 2 =
(G_2:S)$. 
\end{enumerate}
\end{proposition}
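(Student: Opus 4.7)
The plan is to adapt the strategy of the proof of Proposition~\ref{Prop:InfCyc}. Use Lemma~\ref{Lem:Normalise} to represent $\Gamma$ as the fundamental group of a normalized finite graph of finite groups $(\Gamma(-), X)$ with spanning tree $T$, so that each edge of $T$ has both embeddings of index at least $2$. The hypothesis $\mu(\Gamma)=2$ combined with \eqref{Eq:FreeEuler} gives $m_\Gamma \chi(\Gamma) = -1$. Pick a vertex $v_0 \in V(X)$, install the orientation $\mathcal{O}_{v_0}(T)$ from Lemma~\ref{Lem:TreeOrient}, and extend to an orientation $\mathcal{O}(X)$; the bijection $e \mapsto t(e)$ turns the Euler characteristic formula \eqref{Eq:EulerCharDecomp} into the crucial identity
\begin{equation*}
n_{v_0} + 1 \;=\; \sum_{e \in \mathcal{O}_{v_0}(T)} \bigl(n_e - n_{t(e)}\bigr) \;+\; \sum_{e \in \mathcal{O}(X)\setminus \mathcal{O}_{v_0}(T)} n_e,
\end{equation*}
where $n_v := m_\Gamma/|\Gamma(v)|$ and $n_e := m_\Gamma/|\Gamma(e)|$ are positive integers. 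The normalization condition \eqref{eq:normal} yields $n_e \geq 2\max(n_{o(e)}, n_{t(e)}) \geq n_{o(e)} + n_{t(e)}$ for every tree edge, while $n_e \geq 1$ for every non-tree edge.

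Next, choose $v_0$ to be a leaf of $T$ whenever $|V(X)| \geq 2$, so that the out-degree $d_{v_0}$ in $\mathcal{O}_{v_0}(T)$ equals $1$. A telescoping computation then gives
\begin{equation*}
\sum_{e \in \mathcal{O}_{v_0}(T)} \bigl(n_e - n_{t(e)}\bigr) \;\geq\; \sum_{e \in \mathcal{O}_{v_0}(T)} n_{o(e)} \;=\; \sum_{v \in V(X)} n_v\, d_v \;\geq\; n_{v_0} + (N - 2),
\end{equation*}
where $N := |V(X)|$, whence $N + q \leq 3$ with $q := |\mathcal{O}(X) \setminus \mathcal{O}_{v_0}(T)|$. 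A separate direct treatment of $|V(X)|=1$ establishes $q \leq 2$ in that case. A short case analysis now disposes of all configurations: $N=1, q=1$ gives case~(i) (a single loop of index~$2$) and $N=1, q=2$ gives case~(ii) (two loops with trivial embeddings, so that the vertex stabilizer is a finite normal subgroup with quotient $F_2$); $N=2, q=0$ reduces to the Diophantine equation $\mathrm{lcm}(a,b)\,(ab-a-b) = ab$ with $a, b \geq 2$, whose only solutions are $\{a,b\} \in \{\{2,3\},\{3,3\},\{2,4\}\}$, producing (iii)$_1$--(iii)$_3$; $N=2, q=1$ with a loop at one vertex forces the tree edge to have index~$2$ on both sides, the loop embeddings to be trivial, and both vertex stabilizers to have order $m_\Gamma$, producing case~(v) via the interpretation of the trivial loop as the $C_\infty$-extension appearing in Proposition~\ref{Prop:InfCyc}(i); and $N=3, q=0$ forces $X$ to be a path with middle vertex of order $m_\Gamma$ and both tree edges of index~$2$ on both sides, producing case~(iv).

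The hard part will be handling the degenerate configuration in the $N=2, q=1$ case when the non-tree edge connects the two distinct vertices rather than being a loop: the equality analysis forces that edge to have trivial embeddings on both sides, so one may swap the roles of tree edge and non-tree edge and re-apply Lemma~\ref{Lem:Normalise} to contract to a single-vertex configuration, thereby placing the group into case~(i) and showing that no new class arises. The remaining work consists of the routine Diophantine analysis for $N=2, q=0$ and the identification of the group-theoretic structure in cases~(iv) and~(v) from the pinned-down index data; these are straightforward but somewhat cumbersome, as the authors note.
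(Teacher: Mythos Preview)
Your proposal is correct and follows exactly the approach the paper indicates: the paper does not give a proof of Proposition~\ref{Prop:mu=2} but says it is obtained ``by an argument similar to that in the proof of Proposition~\ref{Prop:InfCyc}, again based on Lemmas~\ref{Lem:Normalise} and \ref{Lem:TreeOrient}'', which is precisely what you do. Your device of multiplying \eqref{Eq:EulerCharDecomp} through by $-m_\Gamma$ to work with the integers $n_v,n_e$, choosing $v_0$ to be a leaf so that $d_{v_0}=1$, and telescoping to obtain the clean bound $N+q\le 3$ is an efficient organisation of the case analysis; the handling of the degenerate $N=2$, $q=1$ configuration with a non-loop extra edge by swapping the spanning tree and re-normalising is the right move.
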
 

\section{Some criteria for a virtually free group to be `large'}
\label{Sec:Large}

\noindent Our next result collects together a number of equivalent conditions on
a finitely generated virtually free group $\Gamma$ which all say, in
one way or another, that $\Gamma$ is `large' in some particular
sense. Perhaps the most obvious condition in this direction is given
by Pride's concept of being `as large as a free group of rank
$2$'. The concept of `largeness' for groups, first introduced by
S. Pride in \cite{Pride}, and further developed in \cite{EP}, depends
on a certain preorder $\preceq$ on the class of groups, defined in
\cite{EP} as follows: let $G$ and $H$ be groups. Then we write
$H\preceq G$, if there exist 
\begin{enumerate}
\item[(a)] a subgroup $G^0$ of finite index in $G$;
\vspace{2mm}

\item[(b)] a subgroup $H^0$ of finite index in $H$, and a finite
normal subgroup $N^0$ of $H^0$; 
\vspace{2mm}

\item[(c)] a homomorphism from $G^0$ onto $H^0/N^0$.
\end{enumerate}

We write $H\sim G$ if $H\preceq G$ and $G\preceq H$, and we denote by
$[G]$ the equivalence class of the group $G$ under $\sim$. By abuse of
notation, we also denote by $\preceq$ the preorder induced on the
class of equivalence classes of groups. The finitely generated groups
which are `largest' in Pride's sense are the ones having a subgroup of
finite index which can be mapped homomorphically onto the free group
$F_2$ of rank $2$. 

Another, more topological, way of saying that a finitely generated
virtually free group is `large', is that it has infinitely many
ends. Here, the number $e(\Gamma)$ of ends of a group $\Gamma$ is
defined as 
\[
e(\Gamma) = \begin{cases}
\dim H^0(\Gamma, \mathrm{Hom}_\Z(\Z\Gamma, \Z_2)/\Z_2\Gamma),&
\mbox{if $\Gamma$ is infinite,}\\[2mm] 
0,& \mbox{if $\Gamma$ is finite.}
\end{cases}
\]
The reader is referred to \cite{Cohen1} or \cite[Sec.~2]{Cohen2} for
an  introduction to the theory of ends of a group from an algebraic
point of view; for a discussion from a more topological viewpoint,
see, for instance, \cite{Hopf}, \cite{Freud}, or \cite{Specker}.  

Criterion (vii) below will play a role in the proof of
Lemma~\ref{Lem:Main}, which is the main tool in establishing
Theorem~\ref{Thm:Periodic}. 
In item~(vi), the symbol $s_m(\Gamma)$ denotes the number of
subgroups of index $m$ in $\Gamma$.

\begin{lemma}
\label{Lem:chi<0Equivs}
Let $\Gamma$ be a finitely generated virtually free group, and let
$(\Gamma(-), X)$ be a finite graph of finite groups with fundamental
group $\Gamma,$ chosen so as to satisfy the normalisation condition 
\eqref{eq:normal} of
Lemma~{\em \ref{Lem:Normalise}}. Then the following assertions on
$\Gamma$ are equivalent: 
\begin{enumerate}
\item[(i)] $\chi(\Gamma) < 0$.
\vspace{2mm}

\item[(ii)] $\mu(\Gamma) \geq2$.
\vspace{2mm}

\item[(iii)] $\Gamma$ has infinitely many ends.
\vspace{2mm}

\item[(iv)] The function $f_\lambda(\Gamma)$ is strictly increasing.
\vspace{2mm}

\item[(v)] $\Gamma\sim F_2$ in the sense of Pride's preorder $\preceq$
on groups, where $F_2$ denotes the free group of rank $2$. 
\vspace{2mm}

\item[(vi)] $\Gamma$ has fast subgroup growth in the sense that the
inequality $s_{nj}(\Gamma) \geq c\cdot n!$ holds for some fixed positive
integer $j,$  some constant $c>0,$ and all $n\geq1$. 
\vspace{2mm}

\item[(vii)] If $X$ has only one vertex $v,$ then either $X$ has more
than one geometric edge, or $E(X) = \{e_1, \bar{e}_1\}$ and
$(\Gamma(v): \Gamma(e_1)^{e_1}) \geq2;$ if $\vert V(X)\vert \geq2,$
then $X$ is not a tree, or $X$ is a tree with more than one geometric
edge, or $E(X) = \{e_1, \bar{e}_1\}$ and $\chi(\Gamma_0)<0,$ where
$\Gamma_0:= \Gamma_{o(e_1)} \underset{\Gamma(e_1)}{\ast}
\Gamma_{t(e_1)}$. 
\end{enumerate}
\end{lemma}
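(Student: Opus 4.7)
The plan is to use condition~(i), equivalently~(ii), as a central hub and prove it equivalent to each remaining assertion in turn. The equivalence (i)\,$\Leftrightarrow$\,(ii) is immediate from~\eqref{Eq:FreeEuler} together with the fact that $\mu(\Gamma)$ is a non-negative integer. For (i)\,$\Leftrightarrow$\,(vii) I would follow the template provided by the proof of Proposition~\ref{Prop:InfCyc}: if $|V(X)|\geq 2$ choose a tree edge $e_1\in E(T)$, set $v_0=o(e_1)$, orient $T$ via Lemma~\ref{Lem:TreeOrient}, and extend this orientation to $X$; then split the Euler characteristic via~\eqref{Eq:EulerCharDecomp} so that the contribution of the remaining tree edges takes the form $\sum_{e\neq e_1}\!\bigl(|\Gamma(t(e))|^{-1}-|\Gamma(e)|^{-1}\bigr)$, which is $\leq 0$ by the normalisation condition~\eqref{eq:normal}. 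The contribution involving $e_1$, $v_0$, $v_1=t(e_1)$ is precisely $\chi(\Gamma_0)$ and is also $\leq 0$, while each non-tree edge contributes a strictly negative term. A careful tracking of the equality cases then shows that $\chi(\Gamma)\geq 0$ occurs exactly in the situations excluded by~(vii) (a single vertex with no edges or with one loop whose edge stabiliser equals the vertex stabiliser; a two-vertex tree with one geometric edge of index~$2$ at both endpoints). The case $|V(X)|=1$ is handled similarly by direct inspection of the single-vertex variant of~\eqref{Eq:EulerCharDecomp}.

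For (i)\,$\Leftrightarrow$\,(iii) I would invoke that any finitely generated virtually free group $\Gamma$ satisfies $\mu(\Gamma)=0$, $1$, or $\geq 2$, corresponding respectively to $\Gamma$ being finite, virtually infinite-cyclic (Proposition~\ref{Prop:InfCyc}), or containing a free subgroup of rank $\geq 2$ of finite index; and, since the number of ends is a commensurability invariant while $F_k$ has $0,2,\infty$ ends for $k=0,1,\geq 2$ respectively, these three cases correspond precisely to $e(\Gamma)\in\{0,2,\infty\}$. For (i)\,$\Leftrightarrow$\,(v), note that $\Gamma$ has a finite-index free subgroup $\mathfrak F$ of rank $\mu(\Gamma)$, and $\mathfrak F$ admits a surjection onto $F_2$ exactly when $\mu(\Gamma)\geq 2$; coupled with $\Gamma$ being virtually free, Pride's definition yields $\Gamma\sim F_2\Leftrightarrow\mu(\Gamma)\geq 2$.

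The growth-theoretic conditions (iv) and (vi) form the technical heart. For (i)\,$\Rightarrow$\,(vi), take $\mathfrak F\cong F_k$ a free subgroup of rank $k=\mu(\Gamma)\geq 2$ and index $j=m_\Gamma$ in $\Gamma$; each subgroup of index $n$ in $\mathfrak F$ is a subgroup of index $nj$ in $\Gamma$, so by M.~Hall's classical recursion $s_n(F_k)\sim n\,(n!)^{k-1}$ one obtains $s_{nj}(\Gamma)\geq c\cdot n!$. The converse (vi)\,$\Rightarrow$\,(i) is the contrapositive: finite and virtually infinite-cyclic groups have at most polynomially many subgroups of each index, contradicting factorial growth. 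For (i)\,$\Rightarrow$\,(iv), I would combine~\eqref{Eq:Transform} with the explicit formula~\eqref{Eq:gExplicit}: a Stirling-based estimate gives $g_{\lambda+1}(\Gamma)/g_\lambda(\Gamma)\sim C\cdot\lambda^{\mu(\Gamma)-1}$, unbounded once $\mu(\Gamma)\geq 2$; rearranging~\eqref{Eq:Transform} as $f_\lambda(\Gamma)=m_\Gamma\lambda g_\lambda(\Gamma)-\sum_{\mu=1}^{\lambda-1}g_\mu(\Gamma)f_{\lambda-\mu}(\Gamma)$ enables an induction showing $f_{\lambda+1}(\Gamma)>f_\lambda(\Gamma)$ for all $\lambda\geq 1$. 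The reverse (iv)\,$\Rightarrow$\,(i) is immediate: Corollary~\ref{Cor:mu=1} handles $\mu(\Gamma)=1$ (constant sequence), and for finite $\Gamma$ one has $f_1(\Gamma)=1$ and $f_\lambda(\Gamma)=0$ for $\lambda\geq 2$.

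The main obstacle I expect is the implication (i)\,$\Rightarrow$\,(iv), because strict monotonicity for \emph{every} $\lambda\geq 1$ is a pointwise assertion, not merely an asymptotic one, and therefore cannot be read off from the leading-order behaviour of $g_\lambda(\Gamma)/g_{\lambda-1}(\Gamma)$ alone. A clean proof likely requires either a quantitative lower bound on the jumps $f_{\lambda+1}(\Gamma)-f_\lambda(\Gamma)$ extracted from the dominant factorial factors in~\eqref{Eq:gExplicit}, or a sign-based analysis of the recurrence that $f_\lambda(\Gamma)$ inherits from the differential equation satisfied by $G_\Gamma(z)$.
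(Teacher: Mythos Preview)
Your overall architecture matches the paper's proof almost exactly: the hub is condition~(ii), and the equivalences (i)\,$\Leftrightarrow$\,(ii), (ii)\,$\Leftrightarrow$\,(iii), (ii)\,$\Leftrightarrow$\,(vii), (ii)\,$\Rightarrow$\,(v), (v)\,$\Rightarrow$\,(vi), and (vi)\,$\Rightarrow$\,(ii) are handled in the paper by precisely the arguments you sketch (Euler characteristic splitting via Lemma~\ref{Lem:TreeOrient} for~(vii), commensurability invariance of ends for~(iii), Newman's asymptotic for~(vi), polynomial subgroup growth of virtually cyclic groups for the contrapositive).

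The one substantive difference concerns (ii)\,$\Leftrightarrow$\,(iv). You correctly identify (i)\,$\Rightarrow$\,(iv) as the hard implication and propose a direct attack based on~\eqref{Eq:Transform} and growth of $g_{\lambda+1}/g_\lambda$; you also rightly flag that pointwise strict monotonicity for \emph{all} $\lambda\ge1$ does not follow from asymptotics alone. The paper does not carry out any such argument: it simply invokes \cite[Theorem~4]{MuDiss} for the forward direction, together with Corollary~\ref{Cor:mu=1} for the converse. So the obstacle you anticipate is genuine for a self-contained proof, but the paper bypasses it entirely by citation. If you want to stay within the paper's scope, you should do the same; if you prefer an independent argument, the route via the Riccati-type differential equation for $F_\Gamma(z)$ (the one you allude to at the end) is closer to how that theorem is actually established in~\cite{MuDiss} than the convolution inequality you sketch from~\eqref{Eq:Transform}.
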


\begin{proof} 
(i) $\Leftrightarrow$ (ii). This is immediate from
Formula~\eqref{Eq:FreeEuler} plus the fact that $\mu(\Gamma)$ is
integral. 

(ii) $\Leftrightarrow$ (iii). This follows from
\cite[Prop.~2.1]{Cohen2} (i.e., the fact that the number of ends is
invariant when passing to a subgroup of finite index) and Examples~1
and 2 in \cite{Cohen2} computing the number of ends of a free product,
respectively of $C_\infty$.  

(ii) $\Leftrightarrow$ (iv). This follows from
\cite[Theorem~4]{MuDiss} in conjunction with
Corollary~\ref{Cor:mu=1}. 

(ii) $\Rightarrow$ (v). If $\mu(\Gamma)\geq2$, then $\Gamma$ contains
a free group $F$ of rank at least $2$, with $(\Gamma:F) =
m_\Gamma<\infty$; in particular, $F_2 \preceq \Gamma$. 
Since $[F_2]$ is largest with
respect to the preorder $\preceq$ among all equivalence classes of
finitely generated groups, we also have $\Gamma \preceq F_2$, so
$\Gamma \sim F_2$, as claimed. 

(v) $\Rightarrow$ (vi). Suppose that $\Gamma\sim F_2$. Then there
exists a subgroup $\Delta\leq \Gamma$ of index
$(\Gamma:\Delta)=j<\infty$ and a surjective homomorphism $\varphi:
\Delta\rightarrow F_2$. From this plus Newman's asymptotic estimate
\cite[Theorem~2]{Newman} 
\[
s_n(F_r) \sim n (n!)^{r-1}\mbox{ as }n\rightarrow\infty,\quad r\geq2,
\]
it follows that
\[
s_{jn}(\Gamma) \geq s_n(\Delta) \geq s_n(F_2) \geq c\cdot n\cdot n! \geq
c\cdot n!
\]
for $n\geq1$ and some constant $c>0$, whence (vi).

(vi) $\Rightarrow$ (ii). If $\mu(\Gamma)\leq 1$, then either $\Gamma$
is finite, so $s_n(\Gamma)=0$ for sufficiently large $n$, or $\Gamma$
is virtually infinite-cyclic, implying 
\[
s_n(\Gamma) \leq n^\alpha,\quad n\geq1,
\]
for some constant $\alpha$, by \cite[Cor.~1.4.3]{LS}; see also
\cite{Segal}. In both cases, Condition~(vi) does not hold. 

(ii) $\Leftrightarrow$ (vii). This follows by splitting the Euler
characteristic $\chi(\Gamma)$ as in the proof of
Proposition~\ref{Prop:InfCyc}, making use of
Lemmas~\ref{Lem:Normalise} and ~\ref{Lem:TreeOrient}. 
\end{proof}

\section{The main lemma}
\label{sec:mainlemma}

\noindent For a positive integer $n$ and a prime number $p$, denote by
$v_p(n)$ the $p$-adic valuation of $n$, that is, the exponent of the
highest $p$-power dividing $n$. 

\begin{lemma}
\label{Lem:Main}
Let $\Gamma$ be a finitely generated virtually free group of rank
$\mu(\Gamma)\geq2,$ and let $p$ be a prime number not dividing
$m_\Gamma$. Then we have  
\begin{equation}
\label{Eq:nupGenEst}
v_p\big(g_\lambda(\Gamma)\big) \geq v_p(\lambda!),\quad \lambda\geq0.
\end{equation}
In particular, the function $v_p\big(g_\lambda(\Gamma)\big)$ is
non-negative, and unbounded as $\lambda\rightarrow\infty$. 
\end{lemma}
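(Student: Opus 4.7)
The plan is to exploit the explicit product formula \eqref{Eq:gExplicit} together with the standard multinomial identity $(c\lambda)!=(\lambda!)^c\binom{c\lambda}{\lambda,\ldots,\lambda}$ (with $c$ parts of size $\lambda$) to extract an explicit factor $(\lambda!)^{\mu(\Gamma)-1}$ from $g_\lambda(\Gamma)$. Writing $c_w:=m_\Gamma/|\Gamma(w)|$ for any vertex or edge $w$ of $X$, substituting this identity into \eqref{Eq:gExplicit} and using $\sum_{e\in\mathcal{O}(X)}c_e-\sum_{v\in V(X)}c_v=\mu(\Gamma)-1$ (a consequence of \eqref{Eq:EulerCharDecomp} and \eqref{Eq:FreeEuler}) yields
\[
g_\lambda(\Gamma)\;=\;U_\lambda\cdot(\lambda!)^{\mu(\Gamma)-1}\cdot R_\lambda,\qquad R_\lambda:=\frac{\prod_{e\in\mathcal{O}(X)}\binom{c_e\lambda}{\lambda,\ldots,\lambda}}{\prod_{v\in V(X)}\binom{c_v\lambda}{\lambda,\ldots,\lambda}},
\]
where $U_\lambda=\prod_e|\Gamma(e)|^{\lambda c_e}/\prod_v|\Gamma(v)|^{\lambda c_v}$ is a $p$-adic unit because $p\nmid m_\Gamma$. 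Since $\mu(\Gamma)\ge 2$, the claim \eqref{Eq:nupGenEst} reduces to showing that $v_p(R_\lambda)\ge 0$.

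The key elementary observation is that whenever $v$ is an endpoint of an edge $e$ (so $c_v\mid c_e$ with quotient $a_v(e):=|\Gamma(v)|/|\Gamma(e)|$), one has the identity
\[
\binom{c_e\lambda}{\lambda,\ldots,\lambda}\;=\;\binom{c_e\lambda}{c_v\lambda,\ldots,c_v\lambda}\cdot\binom{c_v\lambda}{\lambda,\ldots,\lambda}^{a_v(e)}
\]
(the leading multinomial on the right having $a_v(e)$ parts), displaying the left-hand side as a product of two non-negative integers. Consequently $v_p(\binom{c_e\lambda}{\lambda,\ldots,\lambda})\ge a_v(e)\cdot v_p(\binom{c_v\lambda}{\lambda,\ldots,\lambda})$. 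I now invoke Lemma~\ref{Lem:Normalise} to assume that $\Gamma$ is realised as $\pi_1(\Gamma(-),X)$ with a spanning tree $T$ containing no trivial edges, so that $a_{o(e)}(e),a_{t(e)}(e)\ge 2$ for every $e\in E(T)$.

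When $|V(X)|\ge 2$, applying the above estimate at the two endpoints of each tree edge and adding the two inequalities then dividing by~$2$ yields
\[
v_p(\tbinom{c_e\lambda}{\lambda,\ldots,\lambda})\;\ge\;v_p(\tbinom{c_{o(e)}\lambda}{\lambda,\ldots,\lambda})+v_p(\tbinom{c_{t(e)}\lambda}{\lambda,\ldots,\lambda})
\]
for every geometric tree edge. Summing over the tree edges, the right-hand side becomes $\sum_{v\in V(X)}d_T(v)\,v_p(\binom{c_v\lambda}{\lambda,\ldots,\lambda})$, which dominates $\sum_{v\in V(X)}v_p(\binom{c_v\lambda}{\lambda,\ldots,\lambda})$ since every vertex of a connected tree on at least two vertices has degree at least one in $T$. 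The non-tree edges contribute only non-negatively to $\sum_{e\in\mathcal{O}(X)}v_p(\binom{c_e\lambda}{\lambda,\ldots,\lambda})$, so $v_p(R_\lambda)\ge 0$. When $|V(X)|=1$, Lemma~\ref{Lem:chi<0Equivs}(vii) ensures that either $X$ carries at least two geometric loops or its unique loop $e_1$ satisfies $a_{v_0}(e_1)\ge 2$; in either sub-case, the factorisation applied to the loops alone gives $\sum_e v_p(\binom{c_e\lambda}{\lambda,\ldots,\lambda})\ge 2\,v_p(\binom{c_{v_0}\lambda}{\lambda,\ldots,\lambda})$, again yielding $v_p(R_\lambda)\ge 0$.

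The unboundedness statement then follows from $v_p((p^k)!)=(p^k-1)/(p-1)\to\infty$ as $k\to\infty$. The principal obstacle is conceptual rather than technical: one must recognise that $\binom{c_e\lambda}{\lambda,\ldots,\lambda}$ factors compatibly with subdivision at \emph{either} endpoint of $e$, so that the two-sided normalisation condition $a_v(e)\ge 2$ on tree edges can be exploited simultaneously at both endpoints. Once this viewpoint is adopted, the remaining graph-theoretic bookkeeping is routine.
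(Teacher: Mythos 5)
Your argument is correct, and it takes a route whose mechanism differs from the paper's, although both proofs start from \eqref{Eq:gExplicit}, discard the group-order factors as $p$-adic units since $p\nmid m_\Gamma$, and rely on the normalised decomposition provided by Lemma~\ref{Lem:Normalise}. The paper keeps the factorials intact and estimates $\sum_{e}v_p\big((\lambda m_\Gamma/\vert\Gamma(e)\vert)!\big)-\sum_{v}v_p\big((\lambda m_\Gamma/\vert\Gamma(v)\vert)!\big)$ directly, using Legendre's formula and the floor inequality, pairing each tree edge with a \emph{single} endpoint via the root orientation $\mathcal{O}_{v_0}(T)$ of Lemma~\ref{Lem:TreeOrient}; the leftover root vertex and the distinguished edge $e_1$ are then absorbed into the Euler characteristic of the auxiliary amalgam $\Gamma_0$, and criterion (vii) of Lemma~\ref{Lem:chi<0Equivs} is invoked to finish. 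You instead peel off the factor $(\lambda!)^{\mu(\Gamma)-1}$ globally, via $\sum_{e}c_e-\sum_{v}c_v=-m_\Gamma\chi(\Gamma)=\mu(\Gamma)-1$, and reduce the claim to the $p$-integrality of a ratio of multinomial coefficients, which you get by pairing every tree edge with \emph{both} of its endpoints --- this is exactly where the two-sided normalisation condition \eqref{eq:normal} (index at least $2$ on each side of a tree edge) enters --- followed by a degree count in the spanning tree. This avoids Legendre's formula, the root orientation, and, in the case $\vert V(X)\vert\ge2$, criterion (vii) altogether, and it in fact yields the stronger estimate $v_p\big(g_\lambda(\Gamma)\big)\ge(\mu(\Gamma)-1)\,v_p(\lambda!)$, which is sharp for $\mu(\Gamma)=2$ by Remark~\ref{Rem:nupGenEst}. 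Two minor observations: in your one-vertex case the appeal to Lemma~\ref{Lem:chi<0Equivs}(vii) is superfluous, since there $c_{v_0}=m_\Gamma/\vert\Gamma(v_0)\vert=1$, the denominator multinomial equals $1$, and $v_p(R_\lambda)\ge0$ is automatic (the hypothesis $\mu(\Gamma)\ge2$ is still needed, but only to make the exponent $\mu(\Gamma)-1$ positive); and your averaging step tacitly uses that the vertex multinomials have non-negative valuation, which of course holds because they are integers.
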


\begin{proof}
Let $(\Gamma(-), X)$ be a Stallings decomposition of $\Gamma$, chosen
according to Lemma~\ref{Lem:Normalise}, and let $\mathcal{O}(X)$ be
any orientation of the graph $X$. Then, by
Formula~\eqref{Eq:gExplicit}, plus the fact that $p\nmid m_\Gamma$, we
have   
\begin{equation}
\label{Eq:NormComp}
v_p\big(g_\lambda(\Gamma)\big) = \sum_{e\in\mathcal{O}(X)} v_p\big((\lambda
m_\Gamma/\vert\Gamma(e)\vert)!\big) \, - \, \sum_{v\in V(X)}
v_p\big((\lambda m_\Gamma/\vert\Gamma(v)\vert)!\big),\quad \lambda\geq0. 
\end{equation}

We now distinguish two cases depending on whether $\vert V(X)\vert=1$
or $\vert V(X)\vert\geq2$. 

(a) $V(X) = \{v\}$. Let $\mathcal{O}(X) = \{e_1,\ldots, e_r\}$. Then
$m_\Gamma = \vert \Gamma(v)\vert$, and \eqref{Eq:NormComp} becomes 
\[
v_p\big(g_\lambda(\Gamma)\big) = \sum_{\rho=1}^r v_p\big((\lambda \vert
\Gamma(v)\vert/\vert\Gamma(e_\rho)\vert)!\big)\, -\, v_p(\lambda!),\quad
\lambda\geq0. 
\]
Since $\mu(\Gamma)\geq2$ by hypothesis, the implication (ii)
$\Rightarrow$ (vii) of Lemma~\ref{Lem:chi<0Equivs} tells us that
either $r\geq2$, or $r=1$ and $2\vert\Gamma(e_1)\vert \leq
\vert\Gamma(v)\vert$. Since $\vert \Gamma(e_\rho)\vert \leq \vert
\Gamma(v)\vert$, we conclude that  
\[
v_p\big(g_\lambda(\Gamma)\big) \,\geq\,\left.\begin{cases}
v_p(\lambda!),&r\geq2,\\[2mm]
v_p\left(\binom{2\lambda}{\lambda}\right) + v_p(\lambda!),& r=1,
\end{cases}\right\} \,\geq\, v_p(\lambda!),\quad \lambda\geq0.
\]

(b) $\vert V(X)\vert \geq2$. Let $T$ be a spanning tree in $X$, let
$e_1\in E(T)$ be any edge, and let $\mathcal{O}(X)$ be an orientation
of $X$ extending $\mathcal{O}_{v_0}(T)$, where $v_0 = o(e_1)$. Let
$v_1 = t(e_1)$ and let $\Gamma_0$ be as in
Lemma~\ref{Lem:chi<0Equivs}(vii). Rewriting \eqref{Eq:NormComp} by
means of Legendre's formula for the $p$-part of factorials, and
estimating resulting quantities by means of the inequality 
\[
\lfloor x + y \rfloor \geq \lfloor x\rfloor + \lfloor y\rfloor,\quad
\text {for }x,y\in\mathbb{R}, 
\]
we get
\allowdisplaybreaks
\begin{align}
\label{Eq:pPart}
v_p\big(g_\lambda(\Gamma)\big) &= v_p\big((\lambda m_\Gamma/\vert
\Gamma(e_1)\vert)!\big) - v_p\big((\lambda m_\Gamma/\vert
\Gamma(v_0)\vert)!\big) - v_p\big((\lambda m_\Gamma/\vert
\Gamma(v_1)\vert)!\big)\notag\\[1mm]   
&\hspace{2cm}+\,\sum_{e\in\mathcal{O}_{v_0}(T)\setminus\{e_1\}}
\Big(v_p\big((\lambda
m_\Gamma/\vert \Gamma(e)\vert)!\big) - v_p\big((\lambda m_\Gamma/\vert 
\Gamma(t(e))\vert)!\big)\Big)\notag\\[1mm]     
&\hspace{2cm}+\, \sum_{e\in\mathcal{O}(X)\setminus
\mathcal{O}_{v_0}(T)} v_p\big((\lambda m_\Gamma/\vert
\Gamma(e)\vert)!\big)\notag\\[2mm]  
&= \sum_{\mu\geq1}\Bigg(\Big\lfloor \frac{\lambda
m_\Gamma/\vert\Gamma(e_1)\vert}{p^\mu}\Big\rfloor - \Big\lfloor
\frac{\lambda m_\Gamma/\vert\Gamma(v_0)\vert}{p^\mu}\Big\rfloor -
\Big\lfloor \frac{\lambda
m_\Gamma/\vert\Gamma(v_1)\vert}{p^\mu}\Big\rfloor\notag\\[1mm]  
&\hspace{2cm} +
\sum_{e\in\mathcal{O}_{v_0}(T)\setminus\{e_1\}}\bigg(\Big\lfloor
\frac{\lambda m_\Gamma/\vert\Gamma(e)\vert}{p^\mu}\Big\rfloor -
\Big\lfloor \frac{\lambda
m_\Gamma/\vert\Gamma(t(e))\vert}{p^\mu}\Big\rfloor\bigg)\notag\\[1mm] 
&\hspace{2cm}+ \sum_{e\in\mathcal{O}(X)\setminus\mathcal{O}_{v_0}(T)}
\Big\lfloor \frac{\lambda
m_\Gamma/\vert\Gamma(e)\vert}{p^\mu}\Big\rfloor\Bigg)\notag\\[2mm] 
&\geq\sum_{\mu\geq1}\Bigg(\Big\lfloor \frac{-\lambda m_\Gamma
\chi(\Gamma_0)}{p^\mu}\Big\rfloor +
\sum_{e\in\mathcal{O}_{v_0}(T)\setminus\{e_1\}} \left\lfloor
\frac{\lambda m_\Gamma}{p^\mu}\Big(\frac{1}{\vert\Gamma(e)\vert} -
\frac{1}{\vert\Gamma(t(e))\vert}\Big)\right\rfloor\notag\\[1mm]
&\hspace{5cm}+ \sum_{e\in \mathcal{O}(X)\setminus\mathcal{O}_{v_0}(T)}
\Big\lfloor \frac{\lambda
m_\Gamma/\vert\Gamma(e)\vert}{p^\mu}\Big\rfloor\Bigg) .
\end{align} 
By the normalisation condition \eqref{eq:normal} on $(\Gamma(-), X)$, we have
$\chi(\Gamma_0)\leq 0$ and 
\[
2\vert\Gamma(e)\vert \leq \vert \Gamma(t(e))\vert,\quad
\text {for }e\in\mathcal{O}_{v_0}(T)\setminus\{e_1\}, 
\]
so that the inequality for $v_p\big(g_\lambda(\Gamma)\big)$ resulting from
\eqref{Eq:pPart} entails 
\begin{multline}
\label{Eq:pPartEst}
v_p\big(g_\lambda(\Gamma)\big) \geq v_p\big((\lambda m_\Gamma
m_{\Gamma_0}^{-1}(\mu(\Gamma_0)-1))!\big) +
\sum_{e\in\mathcal{O}_{v_0}(T)\setminus\{e_1\}} v_p\big((\lambda
m_\Gamma/\vert\Gamma(t(e))\vert)!\big)\\[2mm]  
+ \sum_{e\in\mathcal{O}(X)\setminus\mathcal{O}_{v_0}(T)}
v_p\big((\lambda m_\Gamma/\vert\Gamma(e)\vert)!\big). 
\end{multline}
By hypothesis, we have $\mu(\Gamma)\geq2$ hence, again by the
implication (ii) $\Rightarrow$ (vii) of  Lemma~\ref{Lem:chi<0Equivs},
at least one of the assertions 
\begin{enumerate}
\item[(1)] $\mathcal{O}(X)\setminus\mathcal{O}_{v_0}(T) \neq \emptyset$,
\vspace{2mm}
\item[(2)] $\vert\mathcal{O}_{v_0}(T)\vert \geq2$,
\vspace{2mm}

\item[(3)] $\mu(\Gamma_0)\geq2$
\end{enumerate}
holds true. It follows now from \eqref{Eq:pPartEst} that the estimate 
\eqref{Eq:nupGenEst} also holds in Case~(b), finishing the proof of 
the lemma.
\end{proof}

\begin{remark}
\label{Rem:nupGenEst}
It is easy to see that the estimate \eqref{Eq:nupGenEst} in
Lemma~\ref{Lem:Main} is, in general, best possible. For
instance, let $\Gamma= \PSL_2(\Z) \cong C_2\ast C_3$. Then 
\[
g_\lambda(\Gamma) = \frac{(6\lambda)!}{(2\lambda)!\, (3\lambda)!\,
2^{3\lambda}\, 3^{2\lambda}}, 
\]
and, for a prime $p\geq7$ and $\lambda=p^r$ with $r\geq0,$ we have
\[
v_p\big(g_\lambda(\Gamma)\big) = \frac{p^r-1}{p-1} = v_p(\lambda!).
\]
\end{remark}

\section{Proof of Theorem~\ref{Thm:Periodic}}
\label{Sec:PeriodicThmProof}

\noindent Theorem~\ref{Thm:Periodic} follows easily from
Lemmas~\ref{Lem:Periodicity} and \ref{Lem:Main}. Indeed, if
$\mu(\Gamma)=0$, then $\Gamma$ is finite, we have
$m_\Gamma=\vert\Gamma\vert$, and thus 
$f_1(\Gamma)=1$ and $f_\lambda(\Gamma)=0$ for $\lambda\ge2$, 
so that $f_\lambda(\Gamma)$ is ultimately periodic with period and
preperiod equal to $1$ modulo any prime power. If $\mu(\Gamma) = 1$,
then, by Corollary~\ref{Cor:mu=1}, $f_\lambda(\Gamma)$ is constant,
thus purely periodic with period equal to $1$, again modulo any prime
power. Now suppose that $\mu(\Gamma)\geq2$. Given a positive integer
$\alpha$, let $\lambda_0(\alpha)$ be chosen according to
Lemma~\ref{Lem:Main} such that $v_p\big(g_\lambda(\Gamma)\big) \geq\alpha$
for all $\lambda\geq \lambda_0(\alpha)$ and
$v_p\big(g_{\lambda_0(\alpha)-1}(\Gamma)\big) < \alpha$.  
Then consider Equation~\eqref{Eq:Transform} for
$\lambda\geq\lambda_0(\alpha)$. All summands on the left-hand side
corresponding to indices $\mu\geq \lambda_0(\alpha)$ will vanish
modulo~$p^\alpha$, as does the right-hand side, and we obtain the
congruence 
\begin{equation}
\label{Eq:p^alphaCong}
f_{\lambda+\lambda_0(\alpha)}(\Gamma) \equiv -\big(g_1(\Gamma)
f_{\lambda+\lambda_0(\alpha)-1}(\Gamma) + \cdots +
g_{\lambda_0(\alpha)-1}(\Gamma) f_{\lambda+1}(\Gamma)\big)
\pmod{p^\alpha},\quad 
\lambda\geq0.  
\end{equation}
Applying Lemma~\ref{Lem:Periodicity} with $\Lambda=\Z/p^\alpha\Z$ and
$\mathcal{S} = (f_{\lambda+1}(\Gamma))_{\lambda\geq0}$, we find that,
in this last case, $f_\lambda(\Gamma)$ is ultimately periodic 
modulo~$p^\alpha$ with least period $\omega_0 <
p^{\alpha(\lambda_0(\alpha)-1)}$, whence the result. 

\section{Proof of Theorem~\ref{Thm:mup>0}}
\label{Sec:prop}

\noindent If $p\mid m_\Gamma$, then, by
\cite[Eq.~(3)]{MuPuFree}, the generating function $F_\Gamma(z)$
satisfies the congruence
\begin{equation} \label{eq:Fmodp} 
F_\Gamma(z)=z^{\mu_p(\Gamma)}F_\Gamma^{\mu_p(\Gamma)}(z)
\left(z^{p-1}F_\Gamma(z)^{p-1}-1\right)^{(\mu(\Gamma)-\mu_p(\Gamma))/(p-1)}
\quad \text{modulo }p.
\end{equation}
As is argued in \cite{MuPuFree}, if $\mu_p(\Gamma)>0$,
then it is obvious from this congruence
that $F_\Gamma(z)=0$~modulo~$p$. 

We shall now demonstrate by an induction on $\alpha$ that,
for all integers $\alpha\ge1$, the generating function
$F_\Gamma(z)$ is rational when coefficients are reduced
modulo~$p^\alpha$. For $\alpha=1$ this is true due
to the above remark.

Let us now suppose that we have already shown that
$F_\Gamma(z)$ is rational when coefficients are reduced
modulo~$p^\alpha$, say $F_\Gamma(z)=R(z)$~modulo~$p^\alpha$,
for some rational function $R(z)$ over the integers
whose denominator is not divisible by $p$. 
By \cite[Eq.~(12)]{MuPuFree} and \eqref{eq:Fmodp}, we know that
\begin{multline} \label{eq:FGl} 
F_\Gamma(z)=z^{\mu_p(\Gamma)}F_\Gamma^{\mu_p(\Gamma)}(z)
\left(z^{p-1}F_\Gamma(z)^{p-1}-1\right)^{(\mu(\Gamma)-\mu_p(\Gamma))/(p-1)}\\
+p\cdot\mathcal P(z,F_\Gamma(z),F'_\Gamma(z),
F''_\Gamma(z),\dots,F^{(\mu(\Gamma-1))}_\Gamma(z)),
\end{multline}
where $\mathcal P(z,F_\Gamma(z),F'_\Gamma(z),\dots,
F^{(\mu(\Gamma)-1)}_\Gamma(z))$ is
a polynomial in $z,F_\Gamma(z),F'_\Gamma(z),\dots,\break
F^{(\mu(\Gamma)-1)}_\Gamma(z)$
over the {\it rationals}. However, it is proven in \cite[Sections~3
  and~5]{MuPuFree} that, if $p\mid m_\Gamma$, the rational
coefficients can be written with denominators which are relatively
prime to $p$, a fact that we shall tacitly use in the sequel.

We now make the Ansatz $F_\Gamma(z)=R(z)+p^{\alpha}Y(z)$, for some
unknown formal power series $Y(z)$, we substitute in \eqref{eq:FGl},
and then consider the result modulo~$p^{\alpha+1}$. Since
$$
(R(z)+p^{\alpha}Y(z))^e=R^e(z)+ep^\alpha R^{e-1}(z)Y(z)
\quad \text{modulo }p^{\alpha+1},
$$
we arrive at the congruence
\begin{multline} \label{eq:Fcong} 
R(z)+p^{\alpha}Y(z)=
\sum_{i=0}^M (-1)^{M-i}\binom Mi z^{\mu_p(\Gamma)+i(p-1)}\\
\kern4cm
\cdot
\left(
R^{\mu_p(\Gamma)+i(p-1)}(z)
+p^\alpha(\mu_p(\Gamma)+i(p-1))R^{\mu_p(\Gamma)+i(p-1)-1}(z)Y(z)
\right)
\\
+p\cdot\mathcal P(z,R(z),R'(z),\dots,R^{(\mu(\Gamma)-1)}(z))
\quad \quad \text{modulo }p^{\alpha+1},
\end{multline}
where $M$ is short for $(\mu(\Gamma)-\mu_p(\Gamma))/(p-1)$.
By rearranging terms, this turns into
\begin{multline} \label{eq:Fcong2} 
p^\alpha Y(z)\cdot
\bigg(-1+
\sum_{i=0}^M (-1)^{M-i}\binom Mi z^{\mu_p(\Gamma)+i(p-1)}
(\mu_p(\Gamma)+i(p-1))R^{\mu_p(\Gamma)+i(p-1)-1}(z)
\bigg)
\\
=
R(z)-
\sum_{i=0}^M (-1)^{M-i}\binom Mi z^{\mu_p(\Gamma)+i(p-1)}
R^{\mu_p(\Gamma)+i(p-1)}(z)\\
-
p\cdot\mathcal P(z,R(z),R'(z),\dots,R^{(\mu(\Gamma)-1)}(z))
\quad \quad \text{modulo }p^{\alpha+1}.
\end{multline}
By induction hypothesis, the right-hand side is divisible by $p^\alpha$.
We may hence divide both sides by $p^\alpha$, to obtain the congruence
\begin{multline*} %\label{eq:Ycong} 
Y(z)\cdot
\bigg(-1+
\sum_{i=0}^M (-1)^{M-i}\binom Mi z^{\mu_p(\Gamma)+i(p-1)}
(\mu_p(\Gamma)-i)R^{\mu_p(\Gamma)+i(p-1)-1}(z)
\bigg)
=S(z)\\
\quad \quad \text{modulo }p,
\end{multline*}
where $S(z)$ can be written as 
an explicit rational function in $z$ over the integers
with denominator not divisible by $p$.
If we remember that,
from the base case of the induction (see the sentence below
\eqref{eq:Fmodp}), it follows that $R(z)=0$~modulo~$p$,
then we see that the above congruence simplifies further to
\begin{equation} \label{eq:Ycong} 
Y(z)\cdot
\bigg(-1+
(-1)^{M} \mu_p(\Gamma)z^{\mu_p(\Gamma)}
R^{\mu_p(\Gamma)-1}(z)
\bigg)
=S(z)
\quad \quad \text{modulo }p.
\end{equation}
We can therefore
determine $Y(z)$ modulo~$p$ by dividing both sides 
of the congruence by the term in parentheses on the
left-hand side. 
Hence $Y(z)$ is a rational function modulo~$p$,
and therefore $F_\Gamma(z)=R(z)+p^\alpha Y(z)$ is rational
modulo~$p^{\alpha+1}$. This concludes the induction argument.

However, the additional assertions in Theorem~\ref{Thm:mup>0}
are now also obvious: if $\mu_p(\Gamma)=1$, then every time
we divide by $1-(-1)^Mz$, and the induction hypothesis guarantees
that all denominators of fractions in the congruence \eqref{eq:Ycong} are a 
power of $1-(-1)^Mz$, 
cf.\ the implicit definition of $S(z)$ via \eqref{eq:Fcong2}. 
On the other hand,
if $\mu_p(\Gamma)\ge2$, then as induction hypothesis let us suppose that
$R(z)$ is actually a {\it polynomial\/}
modulo~$p^\alpha$. Again using the fact that $R(z)=0$~modulo~$p$, we see 
that, since $\mu_p(\Gamma)+i(p-1)>0$ for all
$i\ge0$ in the current case, the congruence \eqref{eq:Ycong} reduces to
$$
-Y(z)
=S(z)
\quad \quad \text{modulo }p.
$$
Here, the rational function $S(z)$ is actually a {\it polynomial}.
Consequently, $Y(z)$ is a polynomial modulo~$p$, and thus
$F_\Gamma(z)=R(z)+p^\alpha Y(z)$ is a polynomial
modulo~$p^{\alpha+1}$. This completes the proof of the theorem.

\section{Proof of Theorem~\ref{Thm:Main}}
\label{Sec:Main}

\noindent (i) {\it implies} (ii). This is obvious.

\medskip
(ii) {\it implies} (iii).
We may have $p\nmid m_\Gamma$ (Case~(iii)$_1$), or
$p\mid m_\Gamma$ and $\mu_p(\Gamma)>0$ (Case~(iii)$_2$),
or $p\mid m_\Gamma$ and $\mu_p(\Gamma)=0$.
Due to the definition \eqref{Eq:mupDef} of $\mu_p$, the condition 
$\mu_p(\Gamma)=0$ already implies that $p\mid m_\Gamma$
(in the sum on the right-hand side of \eqref{Eq:mupDef} only
the term for $\kappa=m_\Gamma$ can be negative).
Thus, it remains to consider the case where
$\mu_p(\Gamma)=0$. In this case, Theorem~A(iii) in
\cite{MuPuFree} says that either $\mu(\Gamma)=0$,
or $\mu(\Gamma)=1$ and $p=2$. In the former
case, the group $\Gamma$ is finite (Case~(iii)$_3$), while in the
latter case $\Gamma$ is virtually infinite-cyclic (Case~(iii)$_4$).

\medskip
(iii) {\it implies} (i).
We have to distinguish between the various subcases given
in this item. In each case, we have to show that the
generating function $F_\Gamma(z)$ is rational modulo~$p^\alpha$
for every $\alpha\ge1$.

\smallskip
{\it Case} (iii)$_1$. This is taken care of by Theorem~\ref{Thm:Periodic}.

\smallskip
{\it Case} (iii)$_2$. This is dealt with by Theorem~\ref{Thm:mup>0}.

\smallskip
{\it Case} (iii)$_3$. This is obvious since in this case $F_\Gamma(z)=1$.

\smallskip
{\it Case} (iii)$_4$. Corollary~\ref{Cor:mu=1} says that in this case
the sequence of free subgroup numbers $f_\lambda(\Gamma)$ is
constant. Consequently, the corresponding generating function
$F_\Gamma(z)$ is rational even over the integers.


\begin{thebibliography}{99}
\bibitem{Brown} K.\,S. Brown, \textit{Cohomology of groups},
Springer-Verlag, New York, 1982. 

\bibitem{Cohen1} D.\,E. Cohen, Ends and free products of groups,
\textit{Math. Zeitschr.} \textbf{114} (1970), 9--18. 

\bibitem{Cohen2} D.\,E. Cohen, \textit{Groups of Cohomological
Dimension One}, Lecture Notes in Mathematics, vol.~245,
Springer-Verlag, Berlin--Heidelberg--New York, 1972. 
 
\bibitem{DM} A. Dress and T.\,W. M\"uller, Decomposable functors and
the exponential principle, \textit{Adv. Math.} \textbf{129} (1997),
188--221. 

\bibitem{EP} M. Edjvet and S.\,J. Pride, The concept of ``largeness''
in group theory II. In: \textit{Proc. Groups-Korea 1983}, Lecture
Notes in Math., vol.~1098, Springer-Verlag, Berlin-Heidelberg-New York,
1985, pp.~29--54. 

\bibitem{Freud} H. Freudenthal, \"Uber die Enden diskreter R\"aume und
Gruppen, \textit{Comment. Math. Helv.} \textbf{17} (1944), 1--38. 

\bibitem{Hopf} H. Hopf, Enden offener R\"aume und unendliche
diskontinuierliche Gruppen, \textit{Comment. Math. Helv.} \textbf{16}
(1943), 81--100. 

\bibitem{KKM} M. Kauers, C. Krattenthaler, and T.\,W. M\"uller,  A
method for determining the mod-$2^k$ behaviour of recursive sequences,
with applications to subgroup counting, \textit{Electron. J. Combin.}
\textbf{18} (2012), Art.~\#P37, 83~pp. 

\bibitem{KM2} C. Krattenthaler and T.\,W. M\"uller, A 
Riccati differential equation and free subgroup numbers
for lifts of $\PSL_2(\Z)$ modulo prime powers, 
\textit{J. Combin. Theory Ser.~A} \textbf{120} (2013), 2039--2063.

\bibitem{KM} C. Krattenthaler and T.\,W. M\"uller, A method for
determining the mod-$3^k$ behaviour of recursive sequences, 
preprint, 83 pages; 
{\tt ar$\chi$iv:1308.2856}.

\bibitem{LiNie} R. Lidl and H. Niederreiter, \textit{Finite Fields},
Encyclopedia of Mathematics and its Applications, vol.~20, 2nd~edition,
Cambridge University Press, 1997. 

\bibitem{LS} A. Lubotzky and D. Segal, \textit{Subgroup Growth},
Progress in Mathematics, vol.~212, Birkh\"auser--Verlag,
Basel-Boston-Berlin, 2003. 

\bibitem{MuEJC} T.\,W. M\"uller, A group-theoretical generalization of Pascal's triangle, \textit{Europ. J. Conbinatorics} \textbf{12} (1991), 43--49.

\bibitem{MuDiss} T.\,W. M\"uller, Combinatorial aspects of finitely
generated virtually free groups, \textit{J. London Math. Soc.} (2)
\textbf{44} (1991), 75--94. 

\bibitem{MuHecke} T.\,W. M\"uller, Parity patterns in Hecke groups and
  Fermat  
primes, in: {\em Groups: Topological, Combinatorial and
    Arithmetic Aspects,} Proceedings of a conference held 1999 in 
  Bielefeld (T.\,W.~M\"uller, ed.), LMS Lecture Note Series, vol.~311,
  Cambridge University Press, Cambridge, 2004,   
pp.~327--374. 

\bibitem{MuPuFree} T.\,W. M\"uller and J.-C. Schlage-Puchta, Modular
arithmetic of free subgroups, \textit{Forum Math.} \textbf{17} (2005),
375--405. 

\bibitem{Newman} M. Newman, Asymptotic formulas related to free
products of cyclic groups, \textit{Math. Comp.} \textbf{30} (1976),
838--846. 
 
\bibitem{Pride} S.\,J. Pride, The concept of largeness in group
theory. In: \textit{Word Problems II}, North Holland Publishing
Company, 1980, pp.~299--335. 

\bibitem{Schreier} O. Schreier, Die Untergruppen der freien Gruppen,
\textit{Abh. Math. Sem. Univ. Hamburg} \textbf{5} (1927), 161--183. 

\bibitem{Segal} D. Segal, Subgroups of finite index in soluble groups
I. In: \textit{Groups St Andrews 1985}, London Math. Soc.  Lecture
Note Series, vol.~121, Cambridge University Press, Cambridge, 1986,
pp.~307--314. 

\bibitem{Serre1} J.-P. Serre, Cohomologie des groupes discrets. In:
\textit{Prospects in Mathematics}, Ann. Math. Stud., vol.~70,
Princeton University Press, 1971, pp.~77--169. 

\bibitem{Serre2} J.-P. Serre, \textit{Arbres, Amalgames, $SL_2$},
Ast\'erisque, vol.~46, Soci\'et\'e math\'ematique de France, Paris,
1977. 

\bibitem{Specker} E. Specker, Die erste Cohomologiegruppe von
\"Uberlagerungen und Homotopieeigenschaften dreidimensionaler
Mannigfaltigkeiten, \textit{Comment. Math. Helv.} \textbf{23} (1949),
303--333.  

\bibitem{Stallings} J. Stallings, On torsion-free groups with
infinitely many ends, \textit{Ann. Math.} \textbf{88} (1968),
312--334. 

\bibitem{Wall} C.\,T.\,C. Wall, Poincar\'e complexes: I,
\textit{Ann. Math.} \textbf{86} (1967), 213--245. 

\end{thebibliography}
\end{document}